\documentclass[12pt]{amsart}

\usepackage{amsmath,amssymb,amsthm}    
\usepackage[colorlinks]{hyperref} 
\usepackage[abbrev,msc-links]{amsrefs}  
\usepackage{enumerate}
\usepackage{multirow}
\usepackage{enumitem}
\usepackage{float}
\usepackage{mathrsfs}
\usepackage{caption}
\usepackage{comment}

\textheight=23cm \textwidth=17cm \hoffset=-2cm \voffset=-1cm

\linespread{1.15}

\usepackage[capitalise]{cleveref}

%\usetikzlibrary{arrows}

\newcommand{\E}{{\mathbb{E}}}

\newtheorem{theorem}{Theorem}

\newtheorem{lemma}[theorem]{Lemma}

\newtheorem{conjecture}{Conjecture}
\newtheorem{openquestion}[conjecture]{Open question}

\newtheorem{proposition}[theorem]{Proposition}

\theoremstyle{remark}
\newtheorem{remark}[theorem]{Remark}

% comments

\newcommand{\blue}[1]{{\textcolor{blue}{#1}}}

% local macroseses

\newcommand{\Z}{\mathbb{Z}}
\newcommand{\rr}{\mathbb{E}}

\newcommand{\EE}{\mathscr{E}}

\newcommand{\ball}{\mathcal{B}}

\newcommand{\intr}{{\rm int}\,}
\newcommand{\dist}{{\rm dist}\,}

\newcommand{\wt}{\widetilde}

\title[Upper bounds on chromatic number in low dimensions]{Upper bounds on chromatic number of $\E^n$ in low dimensions}
\author{A. Arman}
\address{Department of Mathematics, University of Manitoba, Winnipeg, MB, R3T 2N2, Canada}
\email{andrew0arman@gmail.com}
\thanks{The first author was supported by NSERC of Canada Discovery Grant RGPIN-2020-05357 and University of Manitoba Research Grant Program project ``Covering problems with applications in Euclidean Ramsey theory and convex geometry''}

\author{A.\ Bondarenko}
\address{Department of Mathematical Sciences, Norwegian University of Science and Technology, NO-7491 Trondheim, Norway}
\email{andriybond@gmail.com}
\thanks{The second author was supported in part by Grant 275113 of the Research Council of Norway.}

\author{A.\ Prymak}
\address{Department of Mathematics, University of Manitoba, Winnipeg, MB, R3T 2N2, Canada}
\email{prymak@gmail.com}
\thanks{The third author was supported by NSERC of Canada Discovery Grant RGPIN-2020-05357.}

\author{D.\ Radchenko}
\address{ETH Zurich, Mathematics Department, Zurich 8092, Switzerland}
\email{danradchenko@gmail.com}

\date{\today}

\begin{document}

	\begin{abstract}
		Let $\chi(\E^n)$ denote the chromatic number of the Euclidean space $\E^n$, i.e., the smallest number of colors that can be used to color $\E^n$ so that no two points unit distance apart are of the same color. We present explicit constructions of colorings of $\E^n$ based on sublattice coloring schemes that establish the following new bounds: $\chi(\E^5)\le 140$, $\chi(\E^n)\le 7^{n/2}$ for $n\in\{6,8,24\}$, $\chi(\E^7)\le 1372$, $\chi(\E^{9})\leq 17253$, and $\chi(\E^n)\le 3^n$ for all $n\le 38$ and $n=48,49$. 
	\end{abstract}	

	\maketitle

	\section{Introduction}
 
	We denote by $\chi(A)$ the chromatic number of $A\subset \E^{n}$, which is the least number of colors needed to color $A$ so that any two points distance one apart receive different colors. Determining $\chi(\E^n)$ is a very challenging question, which is solved only for the trivial case $n=1$, where $\chi(\E^1)=2$. For $n=2$ this problem is known as Hadwiger-Nelson problem. Using a hexagonal tiling, it is not hard to show $\chi(\E^2)\le 7$, and the current best lower bound $\chi(\E^2)\ge 5$ was obtained only relatively recently by de Grey~\cite{deGrey}. 

	There has been a considerable attention paid to the lower estimates of $\chi(\E^n)$ for small $n$, see, e.g.~\cite{RPDG}*{Sect.~5.9}, \cite{Polymath-wiki} or \cite{BoRa}, including the references therein. However, it appears that the upper estimates are almost not studied. In this work we establish a number of new upper bounds on $\chi(\E^{n})$ and hope to encourage further research in this direction. 

	Asymptotically as $n\to\infty$ the best known bounds on $\chi(\E^n)$ are
	\[
	(1.239+o(1))^n\le \chi(\E^n)\le (3+o(1))^n,
	\]
	obtained by Raigorodskii~\cite{raigorod} and Larman and Rogers~\cite{LarRog}, respectively, where the $o(1)$ terms are not quantified. %\footnote{The $o(1)$ term in the upper bound was recently improved by Prosanov~\cite{Prosanov} to $\chi(\E^{n})\leq (1+o(1))n\ln n 3^n$.}. 
	%We plan to discuss upper bounds on $\chi(\E^n)$ for large $n$ in a separate forthcoming note.
	
	Let us summarize the previously best known upper bounds on $\chi(\E^n)$ for small dimensions. %While we attempted to search the literature comprehensively, if a reader is aware of better results, we would be grateful for letting us know. % by email \email{prymak@gmail.com}.
	The bound $\chi(\E^3)\le 15$ was established by Coulson~\cite{Coulson} and, independently, by Radoi\v ci\'c and T\'oth~\cite{RadToth}. Both constructions were based on coloring the interiors of Voronoi cells of a certain lattice with the same color (for a brief background on lattices the reader may consult \cref{sec:prel}). The choice of the color, however, was performed differently: in~\cite{Coulson}, coset membership w.r.t. a sublattice was used (sublattice coloring scheme), while in~\cite{RadToth} a linear mapping from the lattice to $\Z_{15}$ was used (linear coloring scheme). Extensions to $n=4$ were mentioned without proofs: $\chi(\E^4)\le 54$ in~\cite{RadToth}, and $\chi(\E^4)\le 49$ in~\cite{CoulsonPayne}. We prove that $\chi(\E^4)\le 49$ in \cref{thm:evendims}. The technique of Radoi\v ci\'c and T\'oth~\cite{RadToth}, i.e. linear coloring schemes based on $A_n^*$ lattice, was studied with computer assistance for the cases $n=5$ and $n=6$ by the contributors ``ag24ag24'' (de Grey) and ``Philip Gibbs'' in Polymath~16 project discussion~\cite{Po1}. Namely, it was announced that $\chi(\mathbb{E}^5)\le 156$ and $\chi(\mathbb{E}^6)\le 448$. 
	
	Using computer assistance, we found sublattice coloring schemes for the lattices $A_5^*$, $E_7^*$ and $A_9^*$ yielding the following result.
	\begin{theorem}\label{thm:An_star}
		$\chi(\E^5)\le 140$, $\chi(\E^7)\le 1372$, $\chi(\E^{9})\le 17253$.
	\end{theorem}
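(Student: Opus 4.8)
The plan is to exhibit, for each of the three dimensions, an explicit lattice together with an explicit sublattice coloring scheme and then verify that the scheme is a proper coloring whose number of colors matches the claimed bounds. The general framework of a sublattice coloring scheme works as follows. Fix a lattice $L\subset\E^n$ whose Voronoi cells tile space, and scale $L$ so that the open Voronoi cell has diameter strictly less than $1$; this guarantees that two points at unit distance cannot lie in the same (closed) cell after a suitable boundary tie-breaking rule. The interior of each Voronoi cell is colored monochromatically, and the color assigned to the cell centered at $v\in L$ is determined by the coset $v+M$ for a fixed sublattice $M\subseteq L$ of finite index, composed with a coloring of the quotient group $L/M$. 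The number of colors is then $[L:M]$ (or a divisor thereof if several cosets share a color), and properness amounts to the combinatorial condition that whenever two lattice points $u,v$ have Voronoi cells that come within unit distance of each other, the cosets $u+M$ and $v+M$ receive different colors.

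First I would, for each of $A_5^*$, $E_7^*$, and $A_9^*$, record the standard basis and the Gram matrix, compute the covering radius to fix the correct scaling, and identify the finite set of \emph{difference vectors} $D=\{u-v: \text{cells of }u,v\text{ are within unit distance}\}\subset L$. This forbidden set $D$ is finite and symmetric; the key reduction is that a coloring $c:L/M\to\{\text{colors}\}$ is proper exactly when $c(x)\ne c(y)$ for all $x,y$ whose difference projects into the image of $D$ in $L/M$, equivalently when no nonzero element of $D+M$ maps to a color-collision. I would then, with the computer assistance already announced in the statement, search over sublattices $M$ of the target index (namely $140$, $1372$, and $17253$) and over colorings of $L/M$, certifying that the image of $D$ in $L/M$ avoids the zero coset and that the chosen color map separates all forbidden pairs.

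The three numerical targets dictate the arithmetic: I expect $140=[A_5^*:M]$, $1372=4\cdot 343=[E_7^*:M]$, and $17253=[A_9^*:M]$ for explicitly chosen sublattices $M$, and the bulk of the work is presenting these sublattices by generator matrices and exhibiting the color assignment as an explicit function on $L/M$ (most cleanly as a group homomorphism $L/M\to G$ into a small abelian group $G$, with colors being the fibers, when such a linear scheme exists). The verification then splits into two finite checks that I would state as the core of the proof: (i) the scaled open Voronoi cell has diameter $<1$, so monochromatic cells are internally consistent; and (ii) for every $d\in D$, the coset class of $d$ in $L/M$ is mapped by $c$ to a color different from $c(0)$, equivalently $d\notin M$ and $c(d)\ne c(0)$ under the translation-invariant coloring.

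The main obstacle will be item (ii) made effective: correctly enumerating the forbidden difference set $D$: it is not merely the minimal vectors of $L$ but all $u-v$ whose cells are within unit distance, which requires controlling the geometry of Voronoi cells of $A_n^*$ and $E_7^*$ and bounding how far apart two cells can be while still admitting a unit-distance pair. I would handle this by bounding $\|d\|$ in terms of the covering radius and cell diameter, reducing $D$ to a finite explicit list, and then the remaining coset-separation check is a finite computation over $L/M$ that the computer search certifies. The honest dependence on computer assistance is acknowledged in the theorem's setup, so the written proof would present the explicit lattices, sublattices, and color maps as a certificate and reduce correctness to the two finite verifications above, which a reader can in principle reproduce.
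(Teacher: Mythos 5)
Your proposal is essentially the paper's proof: the authors use exactly this sublattice (coset) coloring of Voronoi cells, via Coulson's criterion that $\dist(V,w+V)\ge 2R$ for every nonzero $w$ in the sublattice, a finite forbidden set $F=\{x\in\Lambda\setminus\{0\}:\dist(V,x+V)<2R\}$ (whose elements, as you anticipate, have norm bounded by $4R$), and computer-verified explicit generator matrices for sublattices of $A_5^*$, $E_7^*$ and $A_9^*$ of indices $140$, $1372=2^2\cdot 7^3$ and $17253$, with symmetry reductions (a reflection-group lemma) to make the Voronoi-cell distance computations feasible. The only noteworthy difference is at the boundary: instead of your tie-breaking rule with cell diameter strictly below the excluded distance, the paper colors only the open cells and disposes of the measure-zero uncolored set by a de Bruijn--Erd\H{o}s argument, which permits the non-strict threshold $\dist(V,w+V)\ge 2R$ and hence the exactly optimal scaling $\ell=2R$.
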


	Our next result relies on the following fact: if there exists a lattice $\Lambda$ in $\E^n$ with covering-packing ratio not exceeding $2$, then a  sublattice coloring scheme (with a sublattice $3\Lambda$) gives $\chi(\E^{n})\leq 3^n$. Using known results on laminated lattices and a certain inequality estimating the covering radius, we obtained the following.
	\begin{theorem}\label{thm:3n}
		$\chi(\E^n)\le 3^n$ for any $ n\le 38$ and for $n=48,49$.
	\end{theorem}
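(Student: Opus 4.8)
The plan is to prove Theorem~\ref{thm:3n} by reducing it to the key fact stated in the excerpt: if $\E^n$ admits a lattice $\Lambda$ with covering-packing ratio at most $2$, then the sublattice coloring scheme using $3\Lambda$ yields $\chi(\E^n)\le 3^n$. Recall that the covering-packing ratio of $\Lambda$ is $R/r$, where $R$ is the covering radius (largest distance from a point of $\E^n$ to the nearest lattice point) and $r$ is the packing radius (half the minimal distance between distinct lattice points); the ratio being at most $2$ means $R\le 2r$, i.e. $R$ is at most half the minimal vector length. Thus the entire theorem comes down to verifying, for each relevant $n$, the existence of a lattice whose covering radius is at most half its minimal distance. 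My first step is therefore to reformulate the target precisely as a lattice-geometric condition and record why it suffices.

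\medskip

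Next I would assemble the lattices. The excerpt signals that the construction uses \emph{laminated lattices} $\Lambda_n$, whose packing properties are tabulated classically (Conway--Sloane). For laminated lattices one knows the minimal norm and hence the packing radius $r$ exactly. The genuinely delicate quantity is the covering radius $R$, which for many laminated lattices is not known in closed form; this is where the phrase ``a certain inequality estimating the covering radius'' in the excerpt becomes essential. So the second step is to invoke an upper bound $R\le R_0$ for the covering radius of $\Lambda_n$ valid for the dimensions in question, and then check the inequality $R_0\le 2r$ dimension by dimension. A natural candidate for such an inequality is a bound of the form coming from successive minima or from the structure of $\Lambda_n$ as a stack of layers of $\Lambda_{n-1}$: laminating adds one coordinate direction, and the covering radius of the laminate is controlled by the covering radius of the cross-section together with the interlayer spacing via a Pythagorean-type estimate $R^2\le R_{\text{cross}}^2 + (h/2)^2$. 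I expect the proof to set up exactly this recursive or tabulated bound and then run a finite check over $n\le 38$ and $n=48,49$.

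\medskip

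The main obstacle, and the heart of the argument, will be controlling the covering radius sharply enough that $R\le 2r$ actually holds across this whole range. Packing radii grow relatively slowly as one laminates, while covering radii can grow faster, so the ratio $R/r$ creeps upward with $n$; the constraint $R/r\le 2$ is what ultimately limits the dimensions for which $3^n$ is attainable by this method, and explains why the theorem stops at $38$ with only the sporadic extra values $48,49$ (where especially good lattices such as $P_{48}$ or sections of $\Lambda_{48}$, and neighbours of the Leech lattice, exist). I would therefore expect the technical core to be: (i) fix the covering-radius inequality and justify it for laminated lattices, (ii) plug in the known minimal norms, and (iii) verify $R\le 2r$ numerically for each admissible $n$, flagging the dimensions where the best known laminated (or otherwise specially chosen) lattice just barely satisfies the bound and identifying $48,49$ as the isolated high-dimensional successes. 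The only real risk is that a naive covering-radius bound is too weak for borderline dimensions, so the step most likely to require care is choosing, for each $n$, a lattice and a covering-radius estimate tight enough to clear the factor-of-two threshold.
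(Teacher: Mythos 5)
Your reduction and your treatment of $n\le 38$ match the paper's route. The paper reduces exactly as you say (\cref{prop:covering-packing2}: a full rank lattice with covering-packing ratio at most $2$ gives $\chi(\E^n)\le 3^n$ via the sublattice $3\Lambda$), and for $9\le n\le 38$ it uses laminated lattices together with precisely the Pythagorean stacking bound you anticipate: writing $\Lambda_n$ as layers of $\Lambda_{n-1}$ spaced $\sqrt{\pi_{n-1}}$ apart yields $\rho_n\le\sqrt{\pi_{n-1}/4+\rho_{n-1}^2}$, which is the paper's inequality~\eqref{ineq:rhon}. One point your plan underestimates: naively iterating this recursion from low dimensions overshoots the threshold $2$ long before $38$. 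The paper keeps the bound under control by restarting the recursion at dimensions where the covering radius is known \emph{exactly} to be $\sqrt{3}$ (namely $n=12,16,28,32$, via the Conway--Sloane theorem identifying the subcovering radius $h_n$ with the covering radius when $h_n\le\sqrt 3$), and dimension $23$ additionally requires the computational covering-radius result of Dutour Sikiri\'c--Sch\"urmann--Vallentin. You flag that borderline dimensions need tight estimates, which is the right instinct, but the specific restart structure is what makes the range reach $38$. (For $n\le 8$ the paper cites tabulated lattices with ratio strictly below $2$ rather than laminated ones; this is cosmetic.)

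The genuine gap is $n=48,49$. Your proposed candidates ($P_{48}$, sections of $\Lambda_{48}$, neighbours of the Leech lattice) do not work as stated: none of them comes with a covering-radius bound strong enough to certify $R\le 2r$, and the laminated recursion is useless here since its bound already exceeds $2$ just past dimension $38$. The paper's actual mechanism is a gluing construction entirely absent from your proposal: the direct sum of lattices with covering-packing ratios $\rho_1,\rho_2$ has ratio at most $\sqrt{\rho_1^2+\rho_2^2}$, so $\Lambda_{24}\oplus\Lambda_{24}$ (Leech, ratio $\sqrt 2$) achieves ratio at most $2$ in dimension $48$; for dimension $49$ the direct sum $\Lambda_{25}\oplus\Lambda_{24}$ gives only $\sqrt{5/2+2}=\sqrt{9/2}>2$, so the paper introduces a skewed $\pi/3$-sum (\cref{prop:60degsum}) with ratio at most $\sqrt{\rho_1^2+\tfrac34\rho_2^2}$, which for $\rho_1=\sqrt{5/2}$ and $\rho_2=\sqrt2$ gives exactly $2$. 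Without this sum construction (or some substitute certifying a ratio-$2$ lattice in dimensions $48$ and $49$), your argument proves the theorem only for $n\le 38$.
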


	For even dimensions where there exists a lattice with covering-packing ratio at most $3/2$, and some additional properties are known, we can do much better using an appropriate representation of such a lattice as an Eisenstein lattice. This approach also recovers the known bounds for $n=2$ and $n=4$ in a unified way.
	
	\begin{theorem}\label{thm:evendims}
		$\chi(\E^n)\le 7^{n/2}$ for $n\in\{2,4,6,8,24\}$.
	\end{theorem}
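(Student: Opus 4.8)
The plan is to identify $\E^n$ with $\mathbb{C}^{n/2}$ and to use that each dimension $n\in\{2,4,6,8,24\}$ carries a lattice with a fixed-point-free isometry of order three, hence the structure of a free rank-$(n/2)$ module over the Eisenstein integers $\Z[\omega]$, $\omega=e^{2\pi i/3}$. I would take $L$ to be the Eisenstein representation of $A_2$, $D_4$, $E_6$, $E_8$, and the Leech lattice, respectively, each of covering-packing ratio at most $3/2$. The arithmetic input is that $7$ splits: $7=\pi\bar\pi$ with $\pi=3+\omega$ of norm $N(\pi)=|\pi|^2=7$, so that $\Z[\omega]/(\pi)\cong\mathbb{F}_7$ and $L/\pi L\cong\mathbb{F}_7^{\,n/2}$ has exactly $7^{n/2}$ elements. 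Because multiplication by $\pi$ is a similarity of $\mathbb{C}^{n/2}$ of ratio $\sqrt7$, the sublattice $\pi L$ is a rotated, $\sqrt7$-scaled copy of $L$, with index $7^{n/2}$ and minimal distance $\lambda_1(\pi L)=\sqrt7\,\lambda_1(L)$.

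Next I would set up the sublattice coloring scheme attached to $\pi L$: after fixing a scaling of $L$, color each point $x$ by the class in $L/\pi L$ of the $L$-point whose Voronoi cell contains $x$. This uses $7^{n/2}$ colors, so it remains to choose the scale so that no two points at distance $1$ share a color. Writing $\mathcal V$ for the Voronoi cell of $L$ at the origin and noting $\mathcal V-\mathcal V=2\mathcal V$, two equally colored points differ by an element of $2\mathcal V+\pi L$; hence the coloring is proper exactly when $2\mathcal V+\pi L$ avoids the unit sphere. This amounts to two conditions: (i) $2\mathcal V$ lies strictly inside the unit ball (no monochromatic pair inside one cell), and (ii) $\dist(2\mathcal V,\pi L\setminus\{0\})>1$ (no monochromatic pair in distinct cells).

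The crux, and the step I expect to be the main obstacle, is to meet (i) and (ii) at one scale using only that $L$ has covering-packing ratio at most $3/2$. Let $\mu,\rho$ be the covering and packing radii of $L$. The naive estimates $\diam(2\mathcal V)\le 2\mu$ in (i) and $\dist(2\mathcal V,\pi L\setminus\{0\})\ge\lambda_1(\pi L)-2\mu$ in (ii) force $\sqrt7\,\lambda_1(L)>4\mu$, i.e. ratio below $\sqrt7/2\approx1.32$ — too weak, since $D_4$, $E_8$ and the Leech lattice all have ratio $\sqrt2\approx1.41$. The needed gain is to show that $2\mathcal V$ extends far less than $2\mu$ toward the \emph{short} vectors of $\pi L$, its support there being governed by the packing radius $\rho$ rather than by $\mu$; this replaces $\lambda_1(\pi L)>4\mu$ by $\lambda_1(\pi L)>2\mu+2\rho$, which permits ratio up to $\sqrt7-1\approx1.65$ and so covers all five lattices. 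This is precisely the Eisenstein counterpart of the covering-radius inequality behind \cref{thm:3n}: for the sublattice $3\Lambda$ the short vectors point in the same directions as the minimal vectors of $\Lambda$, so their support is automatically $\rho$ and one gets the threshold ratio $\le 2$; for $\pi L$ the short vectors are rotated by $\arg\pi$, so bounding their support is genuinely delicate and is exactly where the extra structural properties of $A_2,D_4,E_6,E_8$ and the Leech lattice enter. Establishing this extent bound for each lattice is the technical heart of the argument.

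Finally I would assemble the proof: for each $n$ exhibit $L$, verify its covering-packing ratio and the extremal-direction input feeding the key estimate, take the scale it provides, and conclude $\chi(\E^n)\le 7^{n/2}$. As a sanity check the scheme reproduces, for $n=2$, the classical $7$-coloring of the plane by a hexagonal tiling, and for $n=4$ the bound $\chi(\E^4)\le 49$ announced in the introduction.
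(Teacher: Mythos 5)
Your overall architecture is the paper's: identify $\E^n$ with $\mathbb{C}^{n/2}$, use the Eisenstein module structure, take $\Lambda'=\alpha\Lambda$ with $\alpha=3+\omega$ of norm $7$ so the index is $7^{n/2}$, and reduce to a distance condition between $2\mathcal V$ and $\alpha\Lambda\setminus\{0\}$. But the step you yourself flag as ``the technical heart'' is a genuine gap, and the mechanism you propose for closing it cannot work as stated. You claim that the support of $2\mathcal V$ toward the short vectors of $\alpha\Lambda$ is governed by the packing radius $\rho$ rather than the covering radius $\mu$, needing per-lattice structural input. However, the support function of the Voronoi cell satisfies $h_{\mathcal V}(u)\ge\rho$ for \emph{every} unit direction $u$ (the inscribed ball of $\mathcal V$ has radius $\rho$), with equality only in directions normal to facets coming from minimal vectors; a short vector of $\alpha\Lambda$ is $\alpha v$ with $v$ minimal, and $\alpha v$ is rotated by $\arg\alpha=\arctan(\sqrt{3}/5)\approx 19^{\circ}$ away from $v$, so generically $h_{\mathcal V}$ in its direction strictly exceeds $\rho$, and your replacement condition $\lambda_1(\alpha\Lambda)>2\mu+2\rho$ (threshold $\sqrt7-1$) is not justified. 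Moreover one must control all of $\alpha\Lambda\setminus\{0\}$, not just its minimal vectors, so a criterion phrased solely through $\lambda_1(\alpha\Lambda)$ is in any case incomplete.

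The paper closes this with a one-line estimate requiring no lattice-specific geometry beyond covering-packing ratio $\le 3/2$ (\cref{thm:7n}): for the sublattice point $\alpha v$, do \emph{not} estimate the support of the cell in the rotated direction of $\alpha v$; instead use the Voronoi half-space of $v$ itself, $2V\subset\{t: t\cdot v\le |v|^2\}$ from \eqref{eqn:voronoi cell prel}. Since $\mathrm{Re}\,\alpha=5/2$, the projection of $\alpha v$ onto $v/|v|$ equals $\tfrac52|v|$, whence $\dist(2V,\alpha v)\ge \tfrac52|v|-|v|=\tfrac32|v|\ge 3\rho\ge 2R$ for \emph{every} $v\in\Lambda\setminus\{0\}$, provided $R\le\tfrac32\rho$ --- exactly the trick behind \cref{prop:covering-packing2}, transported to the Eisenstein setting, after which \cref{obs:coulson} finishes the proof. (The remark after \cref{thm:7n} shows a second half-space, with $u=-\omega^2 v$, improves the threshold only to $\sqrt{7/3}\approx 1.53$, still short of your hoped-for $\sqrt7-1\approx 1.65$; fortunately $3/2$ already covers all five lattices, whose ratios are at most $\sqrt2$.) Two smaller corrections: in dimension $6$ you must take $E_6^*$, not $E_6$ --- the covering-packing ratio of $E_6$ exceeds $3/2$, while that of $E_6^*$ is $\sqrt2$ --- and the cell boundaries need handling, e.g.\ by coloring only translates of $\intr V$ and invoking \cref{prop:measure zero}.
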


	We remark that the proofs of \cref{thm:3n,thm:evendims} do not require computer assistance.

	The current best known upper bounds on $\chi(\E^n)$ for small $n$ are summarized in \cref{tbl:bounds}. Note that the gap between the lower (see, e.g.~\cite{Polymath-wiki}*{Section ``Best known results for the chromatic number in higher dimensions''} or~\cite{BoRa}*{Table~1}) and the upper bounds on chromatic number is large and grows very quickly, namely, for the first few $n$ the known estimates are: $6\le \chi(\E^3)\le 15$, $9\le \chi(\E^4)\le 49$, $9\le \chi(\E^5)\le 140$, $12\le \chi(\E^6)\le 343$, $16\le \chi(\E^7)\le 1372$, $19\le \chi(\E^8)\le 2401$. 

	\begin{table}[h!]
		\begin{center}
			\begin{tabular}{|c|c|c|}
				\hline
				$n$ &  $\chi(\E^n)\le$ & references \\
				\hline
				$2$ & $7$ & John Isbell, see \cite{Soifer}*{p. 29}  \\
				\hline
				$3$ & $15$ & \cite{Coulson}, \cite{RadToth} \\
				\hline
				$4$ & $49$ & \begin{tabular}{@{}c@{}} mentioned in~\cite{CoulsonPayne}; \\ proof in~\cref{thm:evendims}   \end{tabular} \\
				\hline
				$5$ & $140$ &  {\cref{thm:An_star}} \\
				\hline
				$6$ & $343$ &  \cref{thm:evendims}\\
				\hline
				$7$ & $1372$ & \cref{thm:An_star} \\
				\hline
				$8$ & $2401$ & \cref{thm:evendims} \\
				\hline
				$9$ & $17253$ & \cref{thm:An_star} \\
				\hline
				$10\le n\le 21$ & $3^n$ & \cref{thm:3n} \\
				\hline
				$22\le n\le 24$ & $7^{12}$ & \cref{thm:evendims} \\
				\hline
				$25\le n\le 38$, $n=48, 49$ & $3^n$ & \cref{thm:3n} \\
%				\hline
%				$n\ge n_0$ & $3.785(n\log n) 3^n$ & Theorem~\ref{thm:chi_as} \\
				\hline
			\end{tabular}
		\end{center}
		\caption{Upper bounds on $\chi(\E^n)$ for small $n$}
		\label{tbl:bounds}
	\end{table}

	We give the necessary preliminaries in the next section. \cref{thm:An_star} is proved in \cref{sec:computer}, \cref{thm:3n,thm:evendims} are proved in \cref{sec:covering-packing}. The concluding \cref{sec:questions} lists some open questions.

	\section{Preliminaries}\label{sec:prel}
	
	For $x,y\in\E^n$, we denote by $x\cdot y$ the dot product in $\E^n$ and by $|x|:=(x\cdot x)^{1/2}$ the Euclidean norm of $x$. For two sets $A,B\subset \E^n$ we set $\dist(A,B):=\inf\{|x-y|:x\in A,y\in B\}$ and extend the notation to the one-point sets by $\dist(A,b):=\dist(A,\{b\})$. We also define $\ball(R):=\{x\in\E^n:|x|<R\}$ and $\ball[R]:=\{x\in\E^n:|x|\le R\}$.
	
	\subsection{Lattices} For a real $m\times n$ matrix $M$ of rank $n$, a \emph{lattice} $\Lambda$ generated by $M$ is the set $\Lambda=M\Z^n$. The \emph{rank} of a lattice $\Lambda$ is $n$, and if $M$ is a full rank square matrix, then $\Lambda$ is said to be \emph{full rank}.

	A lattice $\Lambda\subset \rr^{m}$ forms a discrete additive subgroup in $\rr^{m}$. An additive subgroup of $\Lambda$ is called a sublattice. It is not hard to see that $\Lambda^{\prime}$ is a sublattice of $\Lambda$ generated by $M$ if and only if $\Lambda^\prime=MC\Z^n$ for some $m\times m$ integer matrix $C$. \emph{Index} of $\Lambda^{\prime}$ (with respect to $\Lambda$) is an index of $\Lambda^{\prime}$ as a subgroup of $\Lambda$. We use the notation $|\Lambda / \Lambda'|$  for the index and note that it is equal to $|\det C|$. 

	For a lattice $\Lambda$ generated by a matrix $M$, a \emph{fundamental parallelepiped} $P$ is an image of $[0,1]^{n}$ under $M:\rr^{n}\to \rr^{m}$. If $M$ is a full rank square matrix, then $\Lambda+P$ tessellates $\rr^{n}$.

	\emph{Voronoi cell} of a lattice $\Lambda$ around the origin is the set 
	$$V=V(\Lambda)=\{{ x}\in \rr^{m}: |{ x}|\leq |{ x}-{ z}| \text{ for all } z\in \Lambda\}.$$
	If $M$ is full rank square matrix, then the Voronoi cell $V=V(\Lambda)$ is a convex centrally-symmetric body, such that $\Lambda+V$ tessellates $\rr^{m}$.  Since the inequalities $x\cdot z\le |z|^2/2$ and $|x|\le|x-z|$ are equivalent, $V$ can be represented as the intersection of half spaces
	\begin{equation}\label{eqn:voronoi cell prel}
		V = \bigcap_{z\in \Lambda\setminus\{0\}}\{x\in\E^m:x\cdot z\le |z|^2/2\}.
		%= \bigcap_{z\in \Lambda\setminus\{0\},\, |z|\le 2R}\{x:x\cdot z\le |z|^2/2\}
	\end{equation} 

%	A \emph{covolume} of a full rank lattice $\Lambda$ is equal to the volume of it's Voronoi cell, to the volume of it's fundamental parallelepiped $P$, and equals to determinant $|M|$ of generating matrix of $M$. If $\Lambda$ is generated by $M\in\mathbb{M}_{m\times n}$ and $m\neq n$, then ($n$-dimensional) volume of fundamental parallelepiped of $\Lambda$ is equal to $\sqrt{|M^{T}M|}$ (see for example~\cite{GoverKrikorian}).  

	The \emph{norm} of the lattice $\Lambda$ is the length of the shortest nonzero vector in $\Lambda$.

	Let $\Lambda$ be a full rank lattice in $\rr^{n}$. The \emph{packing radius} of $\Lambda$ is the largest $r$ such that $\Lambda+\ball(r)$ is a disjoint union of balls in $\rr^{n}$, i.e. balls $\ball(r)$ centered at points of $\Lambda$ pack into $\rr^{n}$. The \emph{covering radius} of $\Lambda$ is the minimum over all $R$ such that $\Lambda+\ball[R]=\rr^{n}$, i.e. the balls $\ball[R]$ centered at points of $\Lambda$ cover entire $\rr^{n}$. The ratio between the covering and the packing radii of $\Lambda$ is called the \emph{covering-packing ratio} of $\Lambda$.

	If $V$ is the Voronoi cell of a full rank $\Lambda$, then packing radius of $\Lambda$ is the largest radius of a ball that can be inscribed in $V$ and covering radius is the smallest radius of a ball that contains $V$. Packing radius of any lattice $\Lambda$ is equal to the half of the norm of $\Lambda$. 
	
	An interested reader is referred to \cite{ConSloane} for further background on lattices.

%	[definitions of $D_n$, $A_n^*$ with concrete representations we used]

	\subsection{Coloring almost all of the space}

	First we prove that sets of measure zero have no effect on chromatic number $\chi(\E^{n})$. 
	\begin{proposition}\label{prop:measure zero}
		Let $A$ be a set of Lebesgue measure zero in $\rr^{n}$, then  $\chi(\rr^{n})=\chi(\rr^n\setminus A)$.
	\end{proposition}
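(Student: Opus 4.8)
The plan is to prove the two inequalities $\chi(\E^n\setminus A)\le\chi(\E^n)$ and $\chi(\E^n)\le\chi(\E^n\setminus A)$ separately. The first is immediate: restricting any proper coloring of $\E^n$ to the subset $\E^n\setminus A$ yields a proper coloring of $\E^n\setminus A$ using no more colors, so $\chi(\E^n\setminus A)\le\chi(\E^n)$.

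For the reverse inequality, the tempting approach is to take a $k$-coloring of $\E^n\setminus A$ and extend it to the points of $A$ one at a time. This is exactly where the main obstacle lies: the unit sphere centered at a point $x\in A$ may, a priori, already display all $k$ colors among the points of $\E^n\setminus A$, so no admissible color remains for $x$, and greedy extension can fail to stay consistent. I would sidestep this entirely by passing to finite configurations. Set $k:=\chi(\E^n\setminus A)$, which is finite since $\chi(\E^n)<\infty$. By the de Bruijn--Erd\H{o}s theorem, the unit-distance graph on $\E^n$ is $k$-colorable as soon as every one of its finite subgraphs is; hence it suffices to show that every finite set $P=\{p_1,\dots,p_m\}\subset\E^n$ induces a $k$-colorable unit-distance graph.

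The key geometric step is that a generic translate of $P$ lands entirely inside $\E^n\setminus A$. Indeed, the set of "bad" translation vectors is $\bigcup_{i=1}^m (A-p_i)$, a finite union of translates of $A$ and therefore of Lebesgue measure zero; its complement is nonempty, so there exists a vector $t$ with $(P+t)\cap A=\emptyset$, i.e.\ $P+t\subset\E^n\setminus A$. Since $x\mapsto x+t$ is an isometry, it preserves the relation of being at distance exactly $1$ in both directions, and thus carries the unit-distance graph on $P$ isomorphically onto the unit-distance graph on $P+t$. As $\E^n\setminus A$ is $k$-colorable, the induced graph on $P+t$—and hence the isomorphic graph on $P$—is $k$-colorable.

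This verifies the hypothesis of de Bruijn--Erd\H{o}s and yields $\chi(\E^n)\le k=\chi(\E^n\setminus A)$, which together with the trivial inequality gives the claimed equality. The only points requiring care are the invocation of de Bruijn--Erd\H{o}s (standard in this setting, and the device that replaces the failed pointwise extension) and the measure-zero computation for the union of translates; both are routine once the reduction to finite subgraphs is made.
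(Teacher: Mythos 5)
Your proof is correct and takes essentially the same route as the paper: both reduce to finite unit-distance configurations via de Bruijn--Erd\H{o}s and then exploit the fact that the set of bad translation vectors $\bigcup_{i}(A-p_i)$ is a finite union of measure-zero sets and so cannot exhaust a set of positive measure. The only (cosmetic) difference is that the paper argues by contradiction with a single extremal finite graph $G$ satisfying $\chi(G)=\chi(\E^n)$, whereas you directly verify $k$-colorability of every finite subgraph by a generic translate.
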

	\begin{proof}
		Assume the contrary, namely that $\chi(\rr^{n})>\chi(\rr^{n}\setminus A)$. According to de Bruijn--Erd\"{o}s theorem~\cite{deBruijnErdos}, there exists a finite geometric graph $G$ (edges are pairs of points distance $1$ apart), such that $\chi(G)=\chi(\rr^{n})$. Fix a geometric embedding of the set of vertices of $G$ into $\rr^{n}$, to which we will simply refer as $G$. Let $c$ be a proper coloring of $\rr^{n}\setminus A$ into $\chi(\rr^{n}\setminus A)$ colors. Then for any ${ x}\in \rr^{n}$, ${ x}+G$ contains a point of $A$, since otherwise $G$ can be properly colored according to $c$. In particular, for every ${ x}\in \ball(1)$ there is some ${ v}\in G$, such that ${x}+{v}\in A$, which implies that ${x}\in ({v}+\ball(1))\cap A-{v}$. Consequently, 
		$$\ball(1)\subset \bigcup_{{v}\in G} (({v}+\ball(1))\cap A-{v}).$$
		Now, the measure of $\ball(1)$ is nonzero and the measure of each $({v}+\ball(1))\cap A-{v}$ is zero. Since $G$ is finite, we derive a contradiction with the assumption that $\chi(\rr^{n})>\chi(\rr^{n}\setminus A)$.
	\end{proof}

	\subsection{Sublattice coloring schemes}

	Let $\Lambda$ be a full rank lattice in $\mathbb{E}^{n}$, $\Lambda'$ be a sublattice of $\Lambda$, $V$ be the Voronoi cell of $\Lambda$ about the origin and $\intr(V)$ be the interior of $V$. A \emph{sublattice coloring} $c(\Lambda,\Lambda')$ is a coloring of almost all of $\E^n$ into $|\Lambda/\Lambda'|$ colors in which each point of $v+\intr(V)$, where $v\in\Lambda$, is colored according to the equivalence class of $v$ in $\Lambda/\Lambda'$. Note that $\cup_{v\in \Lambda}(v+V)$ tessellates $\rr^{n}$, so the uncolored set $\cup_{v\in\Lambda}(v+\partial V)$, where $\partial V$ is the boundary of $V$, has Lebesgue measure zero in $\E^n$, which makes \cref{prop:measure zero} applicable for sublattice colorings. More precisely, if we define the sublattice coloring chromatic number as
	\[
	\chi_s(\E^n,\Lambda,\Lambda',\ell):=\begin{cases}
		|\Lambda/\Lambda'|, & \text{if there are no monochromatic points in $c(\Lambda,\Lambda')$ distance $\ell$ apart},\\
		\infty, & \text{otherwise},
	\end{cases}
	\]
	then $\chi(\E^n)\le \chi_s(\E^n,\Lambda,\Lambda',\ell)$.

	Coulson~\cite{Coulson} used a weaker version of the following proposition to prove that $\chi(\mathbb{E}^{3})\leq 15$.

	\begin{proposition}\label{obs:coulson}
		Let $\Lambda$ be a full rank lattice in $\mathbb{E}^{n}$, $V$ be the Voronoi cell of $\Lambda$ about the origin and $R$ be the covering radius of $\Lambda$. If $\Lambda'$ is a sublattice of $\Lambda$ such that for any $v\in\Lambda'\setminus\{0\}$ we have $\dist(V,v+V)\ge 2R$, then $\chi(\mathbb{E}^{n})\leq |\Lambda/ \Lambda^{\prime}|$. 
	\end{proposition}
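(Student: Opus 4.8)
The plan is to verify that the sublattice coloring $c(\Lambda,\Lambda')$, which uses exactly $|\Lambda/\Lambda'|$ colors, has no monochromatic pair of points at distance exactly $\ell:=2R$. By the reduction preceding the proposition (which invokes \cref{prop:measure zero} to discard the measure-zero uncolored set, together with the scaling invariance of the unit-distance chromatic number), this yields $\chi(\E^n)\le\chi_s(\E^n,\Lambda,\Lambda',2R)=|\Lambda/\Lambda'|$. So I would fix two points of the same color, say $x\in v_1+\intr(V)$ and $y\in v_2+\intr(V)$ with $v_1,v_2\in\Lambda$; by the definition of the coloring, $v:=v_1-v_2\in\Lambda'$, and I would split into the cases $v=0$ and $v\ne 0$.

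First I would handle the case $v=0$, where $x$ and $y$ lie in the common translate $v_1+\intr(V)$. Since $R$ is the covering radius of $\Lambda$, it is the circumradius of $V$, i.e.\ $R=\max_{z\in V}|z|$; an interior point of norm $R$ could be pushed radially outward while remaining in $V$, contradicting maximality, so every point of $\intr(V)$ has norm strictly less than $R$. Applying this to $x-v_1$ and $y-v_1$ gives $|x-y|\le|x-v_1|+|y-v_1|<2R$, so same-cell monochromatic points lie strictly closer than $2R$.

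The crux is the case $v\ne 0$. Translating by $-v_2$, the hypothesis $\dist(V,v+V)\ge 2R$ reads $\dist(v_1+V,v_2+V)\ge 2R$, but this non-strict bound permits distance exactly $2R$, so I must upgrade it to a strict separation for interior points. I claim that $\dist(x,v_2+V)>\dist(v_1+V,v_2+V)$ whenever $x\in\intr(v_1+V)$: if equality held and $b\in v_2+V$ were a nearest point to $x$, then moving $x$ a small distance toward $b$ would keep it inside $v_1+V$ (as $x$ is interior) while strictly decreasing its distance to $b$, contradicting the minimality of $\dist(v_1+V,v_2+V)$. Hence $|x-y|\ge\dist(x,v_2+V)>\dist(v_1+V,v_2+V)\ge 2R$, so different-cell monochromatic points lie strictly farther than $2R$.

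Combining the two cases, every monochromatic pair is at distance either strictly below or strictly above $2R$, so distance exactly $2R$ is never realized monochromatically, which completes the argument. I expect the only delicate point to be this strict separation in the case $v\ne 0$: the openness of the Voronoi cells is exactly what rules out the borderline distance $2R$ left open by the non-strict hypothesis $\dist(V,v+V)\ge 2R$.
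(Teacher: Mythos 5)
Your proposal is correct and follows essentially the same route as the paper: the paper likewise reduces to showing that the sublattice coloring $c(\Lambda,\Lambda')$ has no monochromatic pair at distance exactly $2R$, rules out same-cell pairs because each translate $v+\intr(V)$ lies in the open ball $v+\ball(R)$ of diameter $2R$, and rules out different-cell pairs via the hypothesis $\dist(V,v+V)\ge 2R$ sharpened to a strict inequality by the interiority of $x$. The only difference is cosmetic: you spell out the radial and toward-nearest-point perturbation arguments that the paper merely asserts, and you phrase the argument as a two-case verification rather than a single contradiction.
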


	\begin{proof}
		We need to show that $\chi_s(\E^n,\Lambda,\Lambda',2R)=|\Lambda/\Lambda'|$. Suppose to the contrary that according to $c(\Lambda,\Lambda')$ there exist monochromatic points $x$ and $y$ distance $2R$ apart. For each $v\in V$ we have that $v+\intr(V)$  is a subset of the ball $v+\ball(R)$ of diameter $2R$, so $x$ and $y$ belong to $v+\intr(V)$ and of $u+\intr(V)$, respectively, for some different $v,u\in \Lambda$. Then, according to the construction of $c(\Lambda,\Lambda')$, we must have $v-u\in \Lambda'$. We obtain a contradiction since $2R\le \dist(V,v-u+V)=\dist(v+V,u+V)<|x-y|=2R$, where the last inequality is strict since $x$ belongs to $v+\intr(V)$.  
	\end{proof}

%	If $\Lambda$ and $\Lambda^{\prime}$ are defined as in \cref{obs:coulson} then the coloring of (almost all of) $\rr^n$, where for every $v\in \Lambda$ the entire interior of $v+V$ is colored according to the equivalence class of $v\in \Lambda/\Lambda^{\prime}$, is called a \emph{sublattice coloring scheme} (with lattice $\Lambda$ and sublattice $\Lambda^{\prime}$). We denote by $\chi(\E^n,\Lambda,\Lambda',\ell)$ the smallest number of colors in a sublattice 
	%	Here we should have:
	%	\begin{itemize}
	%		\item[(a)] New constructions for $\chi(\mathbb{E}^{3})\leq 15$.
	%		\item[(b)] New constructions for $\chi(\mathbb{E}^{4})\leq 49$.
	%		\item[(b+)] 
	%		\item[(c)] Description of infinite Voronoi-cell graph, its restriction and lowebound on chromatic numbers of restricted graph. 
	%		\item[(d)] Reason why Coulson aproach would not give better than $49$ for some starting lattices.
	%	\end{itemize}

%	[[Next we describe forbidden cells, and how can a computer be used to construct all such nodes (symmetries taken into account), and verify that a given sublattice has no forbidden nodes except zero (small coefs, invrese size, direct check; schematics of the procedure)]]

	\begin{proposition}\label{prop:covering-packing2}
		If there exists a full rank lattice $\Lambda$ in $\E^n$ with covering-packing ratio not exceeding $2$, then $\chi(\E^n)\le 3^n$.
	\end{proposition}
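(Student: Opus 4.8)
The plan is to apply \cref{obs:coulson} with the sublattice $\Lambda'=3\Lambda$. Since $\Lambda=M\Z^n$ for a full rank square matrix $M$, we have $3\Lambda=M(3I)\Z^n$, so the index is $|\Lambda/3\Lambda|=|\det(3I)|=3^n$. It therefore suffices to verify the distance hypothesis of \cref{obs:coulson}: writing $r$ for the packing radius and $R$ for the covering radius of $\Lambda$ (so $R\le 2r$ by assumption, and $r$ equals half the norm of $\Lambda$), I must show $\dist(V,v+V)\ge 2R$ for every $v\in 3\Lambda\setminus\{0\}$.

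Fix such a $v$ and write $v=3w$ with $w\in\Lambda\setminus\{0\}$. The naive estimate $\dist(V,v+V)\ge |v|-2R\ge 6r-2R$ only yields the weaker ratio bound $3/2$, so the key step is to bound the distance more sharply by projecting onto the direction $\hat w:=w/|w|$. Applying the half-space description \eqref{eqn:voronoi cell prel} with $z=\pm w\in\Lambda\setminus\{0\}$, every $x\in V$ satisfies $x\cdot w\le |w|^2/2$ and $x\cdot(-w)\le|w|^2/2$, so the projection $x\cdot\hat w$ lies in $[-|w|/2,\,|w|/2]$. Consequently, for any $x\in V$ and any $y=3w+y'$ with $y'\in V$ we have $(y-x)\cdot\hat w=3|w|+(y'-x)\cdot\hat w\ge 3|w|-|w|=2|w|$, and hence $|x-y|\ge (y-x)\cdot\hat w\ge 2|w|$. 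This gives $\dist(V,3w+V)\ge 2|w|$.

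Finally, since $w$ is a nonzero lattice vector we have $|w|\ge$ (norm of $\Lambda$) $=2r$, and by hypothesis $R\le 2r$, so $\dist(V,v+V)\ge 2|w|\ge 4r\ge 2R$. As this holds for every $v\in 3\Lambda\setminus\{0\}$, \cref{obs:coulson} applies and yields $\chi(\E^n)\le|\Lambda/3\Lambda|=3^n$. I expect the middle step to be the main obstacle: the crude radius-based bound is off by exactly the factor that would force the covering-packing ratio below $3/2$, and it is the Voronoi facet constraint $x\cdot w\le|w|^2/2$ that upgrades the estimate to $2|w|$ and thereby accommodates any ratio up to $2$.
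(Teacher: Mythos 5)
Your proof is correct and follows essentially the same route as the paper: both take $\Lambda'=3\Lambda$ in \cref{obs:coulson} and use the Voronoi half-space description \eqref{eqn:voronoi cell prel} to obtain the key bound $\dist(V,3w+V)\ge 2|w|\ge 4r\ge 2R$. Your direct projection onto $\hat w$ is just a rephrasing of the paper's step $\dist(V,3v+V)=\dist(2V,3v)\ge 3v\cdot v/|v|-|v|=2|v|$, so there is no substantive difference.
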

	\begin{proof}
		Without loss of generality, assume that the packing radius of $\Lambda$ is $1$. Then the covering radius $R$ is at most $2$. Let $V$ be the Voronoi cell of $\Lambda$ around the origin. For any $v\in\Lambda\setminus\{0\}$, we have $2V\subset \{t\in\E^n:t\cdot v/|v|\le |v|\}$ (see~\eqref{eqn:voronoi cell prel}), therefore,
		\[
		\dist(V,3v+V)=\dist(2V,3v)\ge 3v \cdot v/|v| - |v|= 2|v|\ge 4\ge 2R.
		\]
		Now by \cref{obs:coulson} with $\Lambda'=3\Lambda$ we get $\chi(\E^n)\le 3^n$.
	\end{proof}
	\begin{remark}
		It is well-know that in any dimension there exist lattices with covering-packing ratio less than 3 (see \cite{Banaszczyk}, \cite{Rogers50}). In \cite{Henk}*{Theorem 5.2.3} it was proved that lattices with covering-packing ratio not exceeding $\sqrt{21}/2\approx 2.29$ exist in any dimension. By considering such lattices, one can follow the lines of the proof of Proposition~\ref{prop:covering-packing2} (with $\Lambda^\prime=4\Lambda$) and show that in any dimension $\chi(\E^n)\leq 4^n$. This bound could be potentially better in small dimensions than the current best asymptotic upper bound $(3+o(1))^n$ . 
	\end{remark}

	Let $\omega=e^{2\pi i/3}$ be a primitive cubic root of unity, $\EE=\Z[\omega]$ be the ring of Eisenstein integers. 
	Recall~\cite{ConSloane}*{p.~54} that a $\EE$-lattice (or Eisenstein lattice) is a set of the 
	form 
	\[\{\xi_1v_1+\dots+\xi_nv_n\;\colon\; 
	\xi_1,\dots,\xi_n\in\Z[\omega]\}\subset \mathbb{C}^n\,,\] 
	where $v_1,\dots,v_n$ are some linearly independent (over $\mathbb{C}$) vectors. 
	Any $\EE$-lattice can be regarded as a usual lattice of rank~$2n$ in $\E^{2n}$ that has an automorphism of order $3$ without nonzero fixed points, an automorphism given by the 
	multiplication by $\omega$. Note that multiplication by a $a+b\omega$, where $a,b\in \mathbb{R}$, acts on a 
	$\EE$-lattice $\Lambda$ as homothety-rotation with scaling factor 
	$|a+b\omega|=\sqrt{a^2-ab+b^2}$.
	
	%[[The general inequality $dist(\alpha v_1'+K,\alpha v_2'+K)\ge 2-2/\sqrt{7}$ 	is not always true, but I am not sure how to fix this.]]
	\begin{theorem}\label{thm:7n}
		If there exists a $\EE$-lattice $\Lambda$ in $\mathbb{C}^n$ 
		with covering-packing ratio not exceeding $3/2$, then
		$\chi(\mathbb{E}^{2n}) \le 7^n$.
	\end{theorem}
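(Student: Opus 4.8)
The plan is to apply the sublattice coloring scheme of \cref{obs:coulson} with the sublattice $\Lambda'=\pi\Lambda$, where $\pi=3+\omega\in\EE$ is an Eisenstein integer of norm $N(\pi)=|\pi|^2=9-3+1=7$; equivalently $\pi=\tfrac52+i\tfrac{\sqrt3}2$, so that $\mathrm{Re}\,\pi=\tfrac52$. Since $\Lambda$ is an $\EE$-lattice it is a free $\EE$-module of rank $n$, multiplication by $\pi$ maps $\Lambda$ into itself, and $\Lambda/\pi\Lambda\cong(\EE/\pi\EE)^n$ has exactly $N(\pi)^n=7^n$ elements, so $|\Lambda/\Lambda'|=7^n$. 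It therefore suffices to verify the separation hypothesis $\dist(V,w+V)\ge 2R$ for every $w\in\pi\Lambda\setminus\{0\}$, where $V$ is the Voronoi cell and $R$ the covering radius of $\Lambda$. First I would normalize so that the packing radius of $\Lambda$ equals $1$; then the norm of $\Lambda$ is $2$ and $R\le 3/2$. As in the proof of \cref{prop:covering-packing2}, central symmetry of $V$ gives $\{a-b:a,b\in V\}=2V$, hence the two-cell distance reduces to a one-cell distance, $\dist(V,w+V)=\dist(2V,w)$.

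The main obstacle is that the crude estimate $2V\subset\ball[2R]$ only yields $\dist(2V,w)\ge|w|-2R$, while the shortest vector of $\pi\Lambda$ has length $2\sqrt7\approx 5.29<6\ge 4R$, which is insufficient. The resolution is to project onto a cleverly chosen direction instead of bounding $|w|$. Writing $w=\pi u$ with $u\in\Lambda\setminus\{0\}$, I would project onto the direction of $u$ itself. Using the identification $\E^{2n}=\mathbb{C}^n$ with $x\cdot y=\mathrm{Re}\langle x,y\rangle$, one computes $w\cdot u=\mathrm{Re}(\pi)\,|u|^2=\tfrac52|u|^2$, while the half-space description \eqref{eqn:voronoi cell prel} applied to $u\in\Lambda\setminus\{0\}$ gives $c\cdot u\le|u|^2$ for all $c\in 2V$. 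Hence, for every $c\in 2V$, Cauchy--Schwarz yields
\[
|c-w|\ \ge\ \frac{|(c-w)\cdot u|}{|u|}\ =\ \frac{w\cdot u-c\cdot u}{|u|}\ \ge\ \Big(\tfrac52-1\Big)|u|\ =\ \tfrac32|u|\ \ge\ \tfrac32\cdot 2\ =\ 3\ \ge\ 2R,
\]
where the middle equality uses $w\cdot u-c\cdot u\ge\tfrac32|u|^2>0$ and the penultimate inequality uses $|u|\ge 2$.

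This establishes $\dist(V,w+V)=\dist(2V,w)\ge 2R$ for all nonzero $w\in\pi\Lambda$, and \cref{obs:coulson} then gives $\chi(\E^{2n})\le|\Lambda/\pi\Lambda|=7^n$. I expect the genuinely delicate point to be bookkeeping rather than difficulty: one must confirm that $\pi=3+\omega$ has the \emph{largest} possible real part $\tfrac52$ among Eisenstein integers of norm $7$, since the value $3/2$ enters only through the final chain, where $\mathrm{Re}(\pi)-1=\tfrac32$ must dominate $R$. This tight matching between $\mathrm{Re}(\pi)-1$ and the covering-packing ratio $3/2$ is exactly what makes the bound $7^n$ (rather than a larger power) achievable, and it is the analogue of the factor $3$ used with $\Lambda'=3\Lambda$ in \cref{prop:covering-packing2}.
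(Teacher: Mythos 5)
Your proposal is correct and follows essentially the same route as the paper: both take $\Lambda'=(3+\omega)\Lambda$ of index $7^n$, reduce $\dist(V,w+V)$ to $\dist(2V,w)$ by central symmetry, and bound it from below by projecting onto $u$ (where $w=(3+\omega)u$) via the half-space description~\eqref{eqn:voronoi cell prel}, getting $(\mathrm{Re}(3+\omega)-1)|u|=\tfrac32|u|\ge 3\ge 2R$ before invoking \cref{obs:coulson}. Your closing worry about $3+\omega$ maximizing the real part among norm-$7$ Eisenstein integers is unnecessary --- exhibiting one such element suffices --- and the paper's remark shows the constant $3/2$ is not even optimal (it can be relaxed to $\sqrt{7/3}$ by also projecting onto $-\omega^2 v$).
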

	\begin{proof}
		Set $\Lambda^{\prime} = \alpha\Lambda$, where $\alpha=3+\omega$. Let $V$ be the Voronoi cell of $\Lambda$ around the origin. Assume that the packing radius of $\Lambda$ is $1$ and the covering radius of $\Lambda$ is $R\le 3/2$. For any $v\in \Lambda\setminus\{0\}$, with ``$\cdot$'' being the inner product in $\mathbb{C}^n$, we have
		\[
		\dist(K,\alpha v+K)=\dist(2K, \alpha v) \ge \alpha v \cdot v/|v|-|v|
		= |v|((\alpha v/|v|)\cdot(v/|v|)-1) 
		\ge \tfrac{3}{2}|v|\ge 3 \ge 2R. 
		\] 
		Since $\alpha=3+\omega$ has norm $7$ in $\EE$, we have $|\Lambda/ 
		\Lambda^{\prime}|=7^{n}$, 
		and $\chi(\mathbb{E}^{2n})\leq |\Lambda/ \Lambda^{\prime}|=7^{n}$  by \cref{obs:coulson}.
\end{proof}

\begin{remark}
	By using $2K\subset \{t\in\E^{2n}:t\cdot u\le |u|^2\}$ not only for $u=v$ but also for $u=-w^2v$, it is possible to prove the result of Theorem~\ref{thm:7n} under a slightly weaker assumption that the covering-packing ratio does not exceed $\sqrt{7/3}\approx 1.5275$. We are not aware of any lattices that would utilize this strengthening, so we have chosen to present the weaker but simpler statement.
\end{remark}

	\section{Computer assisted constructions for $n\in\{5,7,9\}$}\label{sec:computer}

	In this section we prove \cref{thm:An_star} using sublattice coloring schemes by explicitly providing the required lattices and sublattices. The verification is computer assisted and will be described below together with the required mathematical content.
	
	\subsection{Verification and constructions}

	Let $\Lambda$ be a lattice of rank $n$ in $\E^m$, i.e. $\Lambda = M \Z^n$ for a $m\times n$ generator matrix $M$ of rank $n$. Let $X=M\E^n$ be the subspace of $\E^m$ spanned by $\Lambda$, $V$ be the Voronoi cell of $\Lambda$ around the origin restricted to $X$ and $R$ be the covering radius of $\Lambda$. We can extend~\eqref{eqn:voronoi cell prel} as follows:
	\begin{equation}\label{eqn:voronoi cell}
		V = \bigcap_{z\in \Lambda\setminus\{0\}}\{x\in X:x\cdot z\le |z|^2/2\}
		= \bigcap_{z\in \Lambda\setminus\{0\},\, |z|\le 2R}\{x\in X:x\cdot z\le |z|^2/2\}.
	\end{equation} 

%	Using the next proposition, we can list all lattice points of certain norm by computing finitely many integer linear combinations of generating vectors (columns of $M$).
%	\begin{proposition}\label{prop:lattice norm}
%		If $x=Mz$, $z\in\Z^n$, then $|z_i|\le $
%	\end{proposition}

	We use certain symmetries available in the lattices under consideration. Suppose $H=\{x\in\E^m:x\cdot w=0\}$, $|w|=1$, is a hyperplane in $\E^m$ containing the origin. The reflection about $H$ is the isometry $R_w$ acting as $R_w(x)= x-2 (x\cdot w)w$. If a lattice is symmetric about a family of reflections, then the considerations can be reduced to a certain polyhedral convex cone $K$. The intersection of the Voronoi cell of the lattice {$\Lambda$} with the cone $K$ is a polyhedron which may have significantly fewer faces than the Voronoi cell {$V$} itself.
	\begin{lemma}\label{lem:K}
		Suppose $W$ is a finite family of unit vectors in $\E^m$ such that $R_{w}(\Lambda)=\Lambda$ for any $w\in W$. Let $G$ be the group of isometries of $X$ generated by the reflections $\{R_{w}:w\in W\}$. {Assume that the cone $K:=\{x\in X:x\cdot w\ge 0 \,\forall w\in W\}$ has non-empty relative interior, and let $V_1:=K\cap V$. Then,}\\ (i) for any $x\in X$ there exists $g\in G$ such that $g(x)\in K$.\\ 
		(ii) for any $x\in K$ we have $\dist(V,x+V)=2\, \dist(V_1,x/2)$; \\ (iii) $\displaystyle V_1
		= \bigcap_{z\in K\cap \Lambda\setminus\{0\},\, |z|\le 2R}\{x\in K:x\cdot z\le |z|^2/2\} $.
	\end{lemma}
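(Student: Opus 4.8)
The plan is to prove the three assertions largely independently, with (i) supplying the group-theoretic reduction that makes the geometric claims (ii) and (iii) meaningful. The underlying principle throughout is that $G$ is a finite reflection group generated by $\{R_w:w\in W\}$, and that the cone $K$ is a fundamental domain for the action of $G$ on $X$. I would begin by recording two symmetry facts that will be used repeatedly: first, since each $R_w$ preserves $\Lambda$ and is a linear isometry fixing the origin, each $R_w$ (and hence every $g\in G$) maps $V$ onto $V$; this follows directly from the defining intersection \eqref{eqn:voronoi cell}, because $g$ permutes the halfspaces $\{x\cdot z\le|z|^2/2\}$ as $z$ ranges over $\Lambda\setminus\{0\}$ with $|z|\le 2R$. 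Second, $g(K)$ is again a cone of the same combinatorial type, and the cones $\{g(K):g\in G\}$ tile $X$.

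For part (i), I would argue that $K$ is a fundamental domain. The cleanest route is to pick any point $x_0$ in the relative interior of $K$ (which exists by hypothesis) and, for arbitrary $x\in X$, choose $g\in G$ maximizing the inner product $g(x)\cdot x_0$ over the finite group $G$. For this maximizing $g$, applying any generator $R_w$ cannot increase the inner product, and the computation $R_w(g(x))\cdot x_0 = g(x)\cdot x_0 - 2(g(x)\cdot w)(x_0\cdot w)$ forces $g(x)\cdot w\ge 0$ for every $w\in W$ (using $x_0\cdot w>0$ for $w\in W$), i.e. $g(x)\in K$. This is the standard Coxeter-group argument and I would present it at that level of detail.

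For part (ii), the key observation is that the Voronoi cell is centrally symmetric, so $\dist(V,x+V)$ is the distance between two translated copies of $V$; writing points as $v_1\in V$, $x+v_2\in x+V$ and using $-v_2\in V$, one reduces $|x+v_2-v_1|$ to twice the distance from $x/2$ to $V$, at least once we know the nearest point of $V$ to $x/2$ lies in $K$. This last point is where (i) does real work: since $x\in K$ and $V$, $K$ are both invariant under the reflections fixing the relevant faces, a nearest-point / convexity argument (projecting onto $V$ and folding back into $K$ via an appropriate $g$ that does not increase the distance to $x$) shows the minimizer can be taken in $V_1=K\cap V$. The main obstacle I anticipate is exactly this reduction of the minimizing point into the fundamental cone; I would handle it by the same ``fold into $K$'' device as in (i), checking that the folding isometry moves the projected point no farther from $x$ because $x$ is already in $K$.

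For part (iii), the containment $V_1\subseteq\bigcap\{x\in K:x\cdot z\le|z|^2/2\}$ is immediate from \eqref{eqn:voronoi cell}, so the content is the reverse containment, i.e. that it suffices to impose the halfspace constraints only for the relatively few $z\in K\cap\Lambda$. Given a point $x\in K$ satisfying $x\cdot z\le|z|^2/2$ for all $z\in K\cap\Lambda\setminus\{0\}$ with $|z|\le 2R$, I must verify $x\cdot z'\le|z'|^2/2$ for an \emph{arbitrary} $z'\in\Lambda\setminus\{0\}$ with $|z'|\le 2R$. By part (i) there is $g\in G$ with $z:=g(z')\in K$, and $g$ preserves both $\Lambda$ and the norm, so $z\in K\cap\Lambda$ with $|z|=|z'|\le 2R$; the assumed inequality gives $x\cdot z\le|z|^2/2$. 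To transfer this back to $z'$ I would use that $x\in K$ together with the explicit form $g^{-1}=\prod R_{w_i}$: writing $x\cdot z' = x\cdot g^{-1}(z) = g(x)\cdot z$ and observing that folding $x$ by $g$ can only decrease its inner product against the cone-vector $z\in K$ (the same monotonicity inequality $R_w(y)\cdot z\le y\cdot z$ valid when $y,z\in K$), one gets $x\cdot z'\le x\cdot z\le|z|^2/2=|z'|^2/2$, completing the proof. The delicate point here, and the one I would state carefully, is the monotonicity inequality $g(x)\cdot z\le x\cdot z$ for $x\in K$ and $z\in K$, which again reduces to the single-reflection estimate and the nonnegativity of the relevant inner products on $K$.
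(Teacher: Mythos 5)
Your proposal is correct and follows essentially the same route as the paper's proof: your extremal-orbit argument for (i) (maximizing $g(x)\cdot x_0$ over $G$ is equivalent to the paper's minimizing $|z-g(x)|$, since $|g(x)|$ is constant along the orbit), the identity $\dist(V,x+V)=2\,\dist(V,x/2)$ followed by folding the nearest point of $V$ into $K$ for (ii), and the single-reflection monotonicity estimate (the paper's facts (a),(b)) to transfer the halfspace constraints in (iii) are all exactly the paper's steps, with (iii) merely phrased via inner products instead of distances. The only ingredient you assert without justification is the finiteness of $G$ (needed for your maximum over $G$ to be attained), which the paper proves by noting that $G$ permutes the finite set of lattice bases of bounded vector length and an isometry is determined by the image of a basis; with that supplied, your argument coincides with the paper's.
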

	\begin{proof}
		We need the following facts, which are immediate by considering the squares of the required inequalities, {and noticing that $t\cdot w$ and  $(R_w(t)-t)\cdot w$ always have opposite signs:}\\ (a) If $|w|=1$, $z\cdot w\ge0$ and $t\cdot w\le 0$ (i.e. if $z$ and $t$ are on different sides of the hyperplane $\{x:x\cdot w\}=0$), then $|z-t|\ge |z-R_w(t)|$. \\ (b) Moreover, if $|w|=1$, $z\cdot w>0$ and $t\cdot w<0$, then $|z-t|< |z-R_w(t)|$.

		(i) {Observe that $G$ is finite. Indeed, all elements of $G$ are isometries of $X$ (and of $\Lambda$). For any $R'>0$, if $B(R')$ is the set of all bases with maximal length of basis vectors not exceeding $R'$, then $B(R')$ is an invariant set under $G$. Now, since $B(R')$ is finite and every isometry is uniquely determined by the image of some fixed base, we may conclude that $G$ is finite.}
		
		Let $z$ be in the relative interior of $K$, then $z\cdot w>0$ for any $w\in W$. Given $x\in {X}$, let $g_0\in G$ be such that $|z-g_0(x)|=\min \{|z-g(x)|:g\in G\}$. If $g_0(x)\in K$, we are done. Otherwise, there exists $w\in W$ such that $g_0(x)\cdot w<0$. Then by (b), $|z-R_w(g_0(x))|<|z-g_0(x)|$, a contradiction with the choice of $g_0$.

		(ii) Since $V$ is origin-symmetric and convex, $\dist(V,x+V)=\dist(2V,x)=2\dist(V,x/2)$. It remains to show that $\dist(V,x/2)=\dist(V_1,x/2)$. Since $V_1\subset V$, $\dist(V,x/2)\le \dist(V_1,x/2)$, so it remains to establish the converse inequality. 
		
		Suppose to the contrary $\dist(V,x/2)< \dist(V_1,x/2)$ and let $t\in V\setminus K$ be such that $\dist (V,x/2)=|x/2-t|$. 
		{Applying (i) to $t$ we obtain $g\in G$, such that $t'=g(t)\in K$. $\Lambda$ is invariant under $G$, and so is Voronoi cell $V$ of $\Lambda$, therefore $t'=g(t)\in V_1$. Representing $g=R_{w_1}\circ \dots\circ R_{w_k}$ with $w_j\in W$, and successively using (a), we obtain $|x/2-t|\ge |x/2-t'|$, contradiction.}  

		(iii) By~\eqref{eqn:voronoi cell}, clearly
		\[
		V_1 = \bigcap_{z\in \Lambda\setminus\{0\},\, |z|\le 2R}\{x\in K:x\cdot z\le |z|^2/2\}
		\subset 
		\bigcap_{z\in K\cap \Lambda\setminus\{0\},\, |z|\le 2R}\{x\in K:x\cdot z\le |z|^2/2\},
		\]
		so we only need to establish the converse inclusion.  
		
		Suppose to the contrary that for some $x\in K$ we have $|x|\le |z-x|$ for any $z\in K\cap \Lambda\setminus\{0\}$, $|z|\le 2R$, while for some $t\in \Lambda\setminus K$ we have $|x|>|t-x|$. Arguing as in part~(ii), we find $g\in G$ with $t'=g(t)\in K$ and  $|t-x|\ge|t'-x|$. Since $t'\in \Lambda$ we have $|x|\le |t'-x|$, and at the same time $|x|>|t-x|\ge|t'-x|$, a contradiction.
	\end{proof}

	Suppose the hypotheses of \cref{lem:K} hold. Let $F\subset \Lambda$ be the set of forbidden nodes of the lattice, i.e. those $x\in\Lambda\setminus \{0\}$ with $\dist (V,x+V)<2R$. Due to (ii) of \cref{lem:K}, it suffices to compute the set $F_1\subset K \cap \Lambda \setminus\{0\}$ of nodes $x$ satisfying $\dist(V_1,x/2)<R$ and then set $F:=\bigcup_{g\in G}g(F_1)$. 

	Now let us describe how we compute the distance from a point $y$ to a polytope $P$ in $\E^m$. Each $k$-dimensional face $f$ of $P$ is the intersection of a $k$-dimensional affine space $a(f)+A(f)\E^k$ with $P$, where $a(f)\in \E^m$ and $A(f)$ is the corresponding $m\times k$ matrix of rank $k$. If $A(f)^+$ denotes the Moore-Penrose inverse of $A(f)$ (see, e.g.~\cite{pseudoinverse}), then $B(f)=A(f)A(f)^{+}$ is the matrix of the projection operator on the range of $A(f)$, and so we have the following formula:
	\[
	\dist(y,P)=\min\{ |(I-B(f))(y-a(f))|: f\text{ is a face of }P\text{ and } a(f)+B(f)(y-a(f))\in f  \}.
	\]  
	If all vertices of $P$ have rational coordinates, then $A(f), A(F)^{+},$ and $B(f)$ have rational entries and can be computed precisely.

	%[[should we describe how V cell is found and how small vectors are computed]]

	It is easy to observe that any $x$ in $F_1$ or in a forbidden set $F$ has norm less than $4R$, which allows to restrict the initial candidates when $F_1$ is constructed. Generating $F$ from $F_1$ is usually straightforward as $G$ usually has a simple structure, e.g. contains all permutations of the coordinates.

	For a given sublattice $\Lambda'\subset \Lambda$, we would like to verify if $\Lambda'\cap F=\emptyset$, in which case $\chi(X)\le |\Lambda/\Lambda'|$ by \cref{obs:coulson}. This is performed using the following proposition. For convenience, we consider $M$ as a linear mapping from $\E^n$ to $X$ and denote by $M^{-1}$ its inverse. Set $\wt F:=M^{-1}F$ and fix a positive integer $s$. 
	\begin{proposition}\label{prop:sublattice check}
		Let $\Lambda'=MC\Z^n$ for a non-singular $n\times n$ matrix $C$ with integer entries. If $\Lambda' \cap F=\emptyset$, then necessarily:\\
		(i) $C\lambda\not\in \wt F$ for any $\lambda=(\lambda_1,\dots,\lambda_n)\in\Z^n$ with $\lambda_1+\dots+\lambda_n\ge0$ and $|\lambda_j|\le s$, $1\le j\le n$.\\
		Further, we have $\Lambda' \cap F=\emptyset$ if and only if either (ii) or (iii) holds, where:\\
		(ii) $C\lambda\not\in \wt F$ for any $\lambda\in\Z^n$ with $\lambda_1+\dots+\lambda_n\ge0$ and $|\lambda_j|\le m_j \gamma$, $1\le j\le n$, where $m_j$ is the (Euclidean) norm of the $j$-th row of $C^{-1}$ and $\gamma=\max\{|f|:f\in\wt F\}$;\\
		(iii) $C^{-1}f\not\in \Z^n$ for any $f=(f_1,\dots,f_n)\in\wt F$ with $f_1+\dots+f_n\ge0$.
	\end{proposition}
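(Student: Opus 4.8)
The plan is to prove \cref{prop:sublattice check}, which provides computationally verifiable conditions equivalent to $\Lambda'\cap F=\emptyset$. First I would translate everything into the coordinates of $\Z^n$ via the inverse map $M^{-1}$. Since $\Lambda=M\Z^n$ and $\Lambda'=MC\Z^n$, a point of $\Lambda'$ has the form $MC\lambda$ for $\lambda\in\Z^n$, and it lies in $F$ precisely when $MC\lambda\in F$, i.e. when $C\lambda\in M^{-1}F=\wt F$. Thus $\Lambda'\cap F=\emptyset$ is equivalent to the statement that $C\lambda\notin\wt F$ for all $\lambda\in\Z^n$. The first observation I would record is that the forbidden set $F$ is origin-symmetric, since $\dist(V,x+V)=\dist(V,-x+V)$ by central symmetry of $V$; hence $\wt F$ is symmetric as well, and one may restrict attention to $\lambda$ (resp.\ $f$) with nonnegative coordinate sum, which accounts for the condition $\lambda_1+\dots+\lambda_n\ge0$ (resp.\ $f_1+\dots+f_n\ge0$) appearing throughout.

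For part~(i), this is simply a necessary condition: it is the specialization of ``$C\lambda\notin\wt F$ for all $\lambda$'' to the finite box $|\lambda_j|\le s$, so it follows immediately (the point of stating it separately is that it gives a cheap preliminary rejection test before the exhaustive checks). For the equivalence with~(ii), the key quantitative step is to bound the size of the $\lambda$ that need to be checked. If $C\lambda\in\wt F$, then $|C\lambda|\le\gamma$ where $\gamma=\max\{|f|:f\in\wt F\}$; writing $\lambda=C^{-1}(C\lambda)$ and letting $m_j$ be the norm of the $j$-th row of $C^{-1}$, the Cauchy--Schwarz inequality gives $|\lambda_j|=|(\text{$j$-th row of }C^{-1})\cdot (C\lambda)|\le m_j|C\lambda|\le m_j\gamma$. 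Hence every $\lambda$ with $C\lambda\in\wt F$ already lies in the finite box described in~(ii), so checking that box exhausts all possibilities; combined with the symmetry reduction this yields the ``if and only if.'' For part~(iii), I would argue that $\Lambda'\cap F=\emptyset$ is the same as saying no element of $\wt F$ lies in $C\Z^n$, which in turn means that $C^{-1}f\notin\Z^n$ for every $f\in\wt F$; the symmetry of $\wt F$ lets us restrict to $f$ with $f_1+\dots+f_n\ge0$.

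The main obstacle is not conceptual but making sure the two equivalent tests~(ii) and~(iii) are each genuinely \emph{equivalent} to the whole condition (not merely necessary like~(i)), and that the bound in~(ii) is stated so that the finite box provably contains \emph{all} offending $\lambda$. The delicate point is the direction of the implication: I must verify that $|C\lambda|\le\gamma$ is a correct consequence of $C\lambda\in\wt F$ (which is immediate from the definition of $\gamma$ as the maximal norm), and then that the row-norm estimate correctly converts a bound on $|C\lambda|$ into coordinatewise bounds on $\lambda$. Once these two facts are in place, both~(ii) and~(iii) reduce the verification to a finite computation, and the proof is complete.
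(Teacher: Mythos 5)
Your proposal is correct and takes essentially the same route as the paper's proof: origin-symmetry of $F$ (and hence of $\wt F$ and $\Lambda'$) justifies the restrictions $\lambda_1+\dots+\lambda_n\ge0$ and $f_1+\dots+f_n\ge0$, part (iii) is the direct reformulation of $\Lambda'\cap F=\emptyset$ in $\Z^n$-coordinates, and the equivalence in (ii) follows from the Cauchy--Schwarz bound $|\lambda_j|=|v_j\cdot f|\le m_j|f|\le m_j\gamma$ with $v_j$ the $j$-th row of $C^{-1}$. Your write-up only differs in being more explicit (e.g., deriving the symmetry of $F$ from that of the Voronoi cell $V$), which the paper leaves implicit.
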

	\begin{proof}
		Necessity of (i) is obvious. For (ii) and (iii), origin-symmetry of $\Lambda'$, $F$ and $\wt F$ allows to impose the conditions $\lambda_1+\dots+\lambda_n\ge 0$ and $f_1+\dots+f_n\ge 0$. Equivalence of $\Lambda' \cap F=\emptyset$ and (iii) is now clear. For (ii), it is enough to observe that if $v_j$ is the $j$-th row of $C^{-1}$ and $f\in\wt F$, then  $\lambda_j=v_j\cdot f$ and by the Cauchy-Schwartz inequality $|\lambda_j |\le m_j |f|$.
	\end{proof}

	Informally, if $\Lambda'$ has a vector in $F$, it is likely to have ``small'' coefficients of its representation, so it makes sense to begin verification with (i). We found that choosing $s=2$ for $n=5$ worked well, while for larger $n$ we usually selected $s=1$. After (i) is verified and no vectors in $F$ is found, we choose to proceed with either (ii) or (iii) depending on the numbers of required evaluations of $C\lambda$ or of $C^{-1}f$.

%	\begin{proposition}\label{prop:4-49}
%		$\chi(\mathbb{E}^{4})\le 49$.
%	\end{proposition}
%
%\begin{proof}
%	Consider $\Lambda=D_4$ (see, e.g.~\cite{ConSloane}*{Sect.~7.1}), i.e. $\Lambda=M\Z^4$, where
%	\[
%	M=\left(\begin{array}{rrrr}
%		1 & 1 & 0 & 0 \\
%		1 & -1 & 1 & 0 \\
%		0 & 0 & -1 & 1 \\
%		0 & 0 & 0 & -1
%	\end{array}\right).
%	\]
%	Then $R=1$ and we can apply \cref{obs:coulson} with $d=2$ and $\Lambda'=MA\Z^4$, where
%	\[MA=
%	\left(\begin{array}{rrrr}
%		-1 & 3 & -2 & -2 \\
%		0 & -1 & 0 & 3 \\
%		3 & 0 & -1 & 0 \\
%		-2 & 2 & 3 & 1
%	\end{array}\right),
%	\] 
%	implying $\chi(\mathbb{E}^{4})\leq |\Lambda/\Lambda'|=49$. Verification of the hypothesis (b) of \cref{obs:coulson} was performed on a computer using \cref{lem:K} and \cref{prop:sublattice check}, see the ``4dim49'' SageMath script at \cite{ABPR-github}.
%\end{proof}

\begin{proof}[Proof of \cref{thm:An_star}]
	We will apply \cref{obs:coulson}. 
	
	Suppose first that $n\in\{5,9\}$. We consider a dilation of the $A_n^*$ lattice, see~\cite{ConSloane}*{Sect.~4.6.6, p.~115}. Namely, set $\Lambda=M\Z^n\subset \E^{n+1}$, where
	\[
	M =  	\left(\begin{array}{rrrr}
		-n & 1 & \dots & 1  \\
		1 & -n & \dots & 1 \\
		&&\dots& \\
		1 & 1 & \dots & -n \\
		1 & 1 & \dots & 1 
	\end{array}\right).
	\]
	The matrices $C_n$ generating required sublattices by $\Lambda'=M C_n \Z^n$ are given as follows: \begin{gather*}
	C_5 = \left(\begin{array}{rrrrr}
		-2 & 1 & -2 & -1 & 0 \\
		-3 & 1 & 0 & -1 & -2 \\
		0 & 1 & 1 & -1 & -3 \\
		-2 & 0 & -2 & 2 & -2 \\
		-2 & -2 & 0 & 0 & -2
	\end{array}\right), \\
	C_9 = \left(\begin{array}{rrrrrrrrr}
		0 & 0 & -3 & 1 & 0 & 0 & -1 & 1 & 0 \\
		1 & 0 & -3 & 1 & 1 & 0 & -1 & 4 & 1 \\
		0 & 0 & -2 & 1 & 0 & -1 & -1 & 1 & 3 \\
		0 & 0 & -3 & 4 & 0 & 0 & -1 & 1 & 0 \\
		0 & 3 & -3 & 1 & 0 & 0 & -1 & 1 & 0 \\
		3 & 0 & -3 & 1 & 0 & 0 & 2 & 1 & 0 \\
		0 & 0 & -4 & 2 & 0 & 3 & -1 & 2 & 0 \\
		0 & 0 & -3 & 1 & 3 & 0 & -1 & 1 & 0 \\
		-1 & 0 & -3 & 1 & -1 & 1 & 1 & 1 & -1
	\end{array}\right).
 	\end{gather*}
 The verification of the hypothesis of \cref{obs:coulson} was performed on a computer using \cref{lem:K} and \cref{prop:sublattice check}~(iii), see the ``59dimAnstar'' SageMath script at \cite{ABPR-github}.
 
 For $n=7$, we use the $E_7^*$ lattice, see~\cite{ConSloane}*{Sect.~4.8.2, p.~125}. Set $\Lambda=M\Z^7\subset\E^8$, where
 	\[
 M = \left(\begin{array}{rrrrrrr}
 	-1 & 0 & 0 & 0 & 0 & 0 & -\tfrac{3}{4} \\
 	1 & -1 & 0 & 0 & 0 & 0 & -\tfrac{3}{4} \\
 	0 & 1 & -1 & 0 & 0 & 0 & \tfrac{1}{4} \\
 	0 & 0 & 1 & -1 & 0 & 0 & \tfrac{1}{4} \\
 	0 & 0 & 0 & 1 & -1 & 0 & \tfrac{1}{4} \\
 	0 & 0 & 0 & 0 & 1 & -1 & \tfrac{1}{4} \\
 	0 & 0 & 0 & 0 & 0 & 1 & \tfrac{1}{4} \\
 	0 & 0 & 0 & 0 & 0 & 0 & \tfrac{1}{4}
 \end{array}\right).
 \]
 A required sublattice is $\Lambda'=M C_7 \Z^7$, where
 \[
 C_7 = \left(\begin{array}{rrrrrrr}
 	0 & -4 & -5 & -3 & -4 & -4 & -1 \\
 	-1 & -5 & -10 & -7 & -5 & -5 & -4 \\
 	-2 & -2 & -9 & -4 & -5 & -4 & -4 \\
 	-3 & -2 & -5 & -4 & -4 & -1 & -3 \\
 	-1 & -1 & -4 & -1 & -3 & 0 & -3 \\
 	-2 & 0 & -1 & 0 & 0 & 0 & 0 \\
 	0 & 4 & 6 & 4 & 4 & 4 & 4
 \end{array}\right).
 \]
 The verification of the hypothesis of \cref{obs:coulson} was performed on a computer using \cref{lem:K} and \cref{prop:sublattice check}~(iii), see the ``7dimE7star'' SageMath script at \cite{ABPR-github}.
\end{proof}

\subsection{Sublattice search strategies}

For a given lattice, when the dimension is relatively small like $n=5$, it is computationally feasible to check all possible sublattices (of certain index) exhaustively, see \cref{prop:negative dim 4}. If the dimension is larger or we are interested in getting a result quickly, then we used a combination of the following two approaches. 

{\bf Randomized search among short non-forbidden nodes.} The basis vectors of the desired sublattice are not elements of the forbidden set $F$. Since our goal is minimizing the index of the sublattice, which equals to the volume of the fundamental parallelepiped of the sublattice, it is natural to search for the basis among {\it short} vectors. To this end, we sort the non-zero lattice nodes which are not in $F$ in ascending order by length. Let $G$ be the set of $N$ smallest such non-forbidden nodes. Then we randomly and uniformly draw $n$ samples from $G$ to form a basis for the sublattice and check as outlined in \cref{prop:sublattice check} whether the resulting sublattice is suitable. The choice of the parameter $N$ is performed in experimental manner: if $N$ is too small, there will not be a good sublattice basis among elements of $G$; while if $N$ is too large, then it may take too much time for the random samples to find a good sublattice.

{\bf ``Gradient'' descent.} Once we found a good sublattice $\Lambda'$, we take one of the vectors generating it and substitute it with other short non-forbidden nodes, choosing one that minimizes the index of the resulting $\Lambda'$ while satisfying $\Lambda'\cap F=\emptyset$. This is repeated for all vectors generating $\Lambda'$ (possibly multiple times) until no further improvement of the index is possible.

\subsection{Lower bounds in small dimensions}

%It is not hard to see that any linear coloring scheme can be expressed as a sublattice coloring scheme. Therefore, it is natural to expect that the latter will outperform the former. Indeed, the bounds $\chi(\E^5)\le 140$ and $\chi(\E^6)\le 384$ we obtained using sublattice colorings are better than previously known ones $\chi(\E^5)\le 156$ and $\chi(\E^6)\le 462$ which used linear coloring~\cite{Po1}, with identical lattices $A_5^*$ and $A_6^*$ considered in both cases. For $n=3$ either type of coloring produces the same result $\chi(\E^3)\le 15$. 

A general lower bound by Coulson~\cite{Coulson}*{Th.~4.5} is $\chi_s(\E^n,\Lambda,\Lambda',\ell)\ge 2^{n+1}-1$, so one cannot improve the inequality $\chi(\E^3)\le 15$ using a sublattice coloring. The lower bounds on sublattice colorings we give below are for specific lattices $\Lambda$ and only for the case when the excluded distance $\ell$ is twice the covering radius of the lattice.

Existence of linear coloring using $A_4^*$ lattice yielding $\chi(\E^4)\le 54$ was stated in~\cite{RadToth}. A better bound $\chi(\E^4)\le 49$ achieved by a sublattice coloring was stated in~\cite{CoulsonPayne}. We prove this bound in \cref{thm:7n} using the $D_4$ lattice (see~\cite{ConSloane}*{p.~118}). We verified that one cannot do better, as well as obtained similar analysis for $A_4^*$ and $A_5^*$.
\begin{proposition}\label{prop:negative dim 4}
	If $R$ is the covering radius of a lattice $\Lambda$, then the smallest values of sublattice chromatic numbers (over all possible sublattices $\Lambda'$) are as follows:
	\begin{center}
		\begin{tabular}{|c|c|c|}
			\hline
			$n$ & $\Lambda$ & $\displaystyle\min_{\Lambda'}\chi_s(\E^n,\Lambda,\Lambda',2R)$\\
			\hline
			$4$ & $D_4$ & $49$ \\
			\hline
			$4$ & $A_4^*$ & $54$ \\
			\hline
			$5$ & $A_5^*$ & $140$ \\
			\hline
		\end{tabular}
	\end{center}
\end{proposition}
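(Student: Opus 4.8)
The plan is to treat \cref{prop:negative dim 4} as an explicit finite computation for three specific lattices, so the ``proof'' is really a verification protocol combined with an exhaustive search, and the main content is arguing that the search is genuinely exhaustive. First I would fix one of the three cases, say $\Lambda=D_4$ in $\E^4$, and recall its standard generator matrix and covering radius $R$ from \cite{ConSloane}. Using \eqref{eqn:voronoi cell}, I would compute the Voronoi cell $V$ explicitly (exploiting the reflection symmetries of $D_4$ via \cref{lem:K} to reduce to the cone $K$ and the smaller polytope $V_1$), and then assemble the forbidden set $F=\{x\in\Lambda\setminus\{0\}:\dist(V,x+V)<2R\}$ using the relation $\dist(V,x+V)=2\dist(V_1,x/2)$ from part (ii) of \cref{lem:K}. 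The key finiteness observation, already noted in the excerpt, is that every forbidden node has norm less than $4R$, so $F$ is a finite, explicitly computable set.

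Next, for the upper-bound half of each table entry (exhibiting a sublattice achieving the stated value) I would simply produce an explicit sublattice $\Lambda'=MC\Z^n$ of the claimed index with $\Lambda'\cap F=\emptyset$, verified through \cref{prop:sublattice check}; this certifies $\chi_s(\E^n,\Lambda,\Lambda',2R)\le$ the stated number by the definition of $\chi_s$ together with \cref{obs:coulson}. The substantive direction is the lower bound, i.e.\ the claim that \emph{no} sublattice does better. Here the plan is an exhaustive enumeration: since any admissible sublattice must have index at least the target value, and we want to rule out every sublattice of strictly smaller index, I would iterate over all candidate sublattices of $\Lambda$ of each index below the claimed minimum and check, via \cref{prop:sublattice check}, that each one intersects $F$ nontrivially (hence gives $\chi_s=\infty$). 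The finiteness of this enumeration rests on the fact that a sublattice avoiding $F$ must have its basis vectors among the non-forbidden nodes, and for bounded index only finitely many sublattices (up to the equivalence that preserves $\Lambda/\Lambda'$) arise; a Hermite normal form parametrization of index-$d$ sublattices makes the enumeration concrete and finite for each $d$.

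I would carry out the three cases ($D_4$, $A_4^*$, $A_5^*$) in parallel, as each follows the identical template: compute $V$ and $F$, then exhaustively search all sublattices of index below the target and confirm each is obstructed by $F$, while exhibiting one sublattice at the target index that avoids $F$. The reduction via \cref{lem:K} is what makes the Voronoi-cell and distance computations tractable, particularly for $A_5^*$ where the full symmetric group acting by coordinate permutations shrinks the problem dramatically.

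The main obstacle will be the combinatorial size of the exhaustive sublattice search for $A_5^*$, where index $140$ in dimension $5$ means the Hermite-normal-form enumeration of all sublattices of index up to $139$ is large; controlling this requires pruning, namely discarding partial Hermite forms as soon as a generated short vector lands in $F$, so that the vast majority of candidates are eliminated early. This is precisely where computer assistance is indispensable, and I would defer the concrete certification to the accompanying SageMath scripts at \cite{ABPR-github}, presenting the proof as a description of the verified algorithm rather than an unrolled calculation.
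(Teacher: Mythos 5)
Your proposal matches the paper's proof in essence: both establish the table by a computer-assisted exhaustive search in which all sublattices of each index below the target are enumerated via a Hermite normal form parametrization and tested against the forbidden set $F$ (computed using \cref{lem:K}) through \cref{prop:sublattice check}, with an explicit admissible sublattice exhibited at the target index. The only difference is a detail of optimization: where you propose pruning partial Hermite forms, the paper instead uses a centered variant of the Hermite normal form (entries above a pivot $l$ taken from $-\lfloor\tfrac l2\rfloor+\{0,\dots,l-1\}$) so that short integer combinations of basis vectors are more likely to be caught early by test~(i) of \cref{prop:sublattice check}.
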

\begin{proof}
	The proof is by computer search. For sublattice colorings, see the scripts ``4dim exh sublattice ...'' and ``5dim exh sublattice A5star'' at \cite{ABPR-github}, with the outputs for each possible index value for the $5$-dimensional case given in the ``exh 5 results'' folder at \cite{ABPR-github}. The main idea is that all possible sublattices of a given lattice can be obtained by $\Lambda'=C\Lambda$ where an integer matrix $C$ is in the Hermite normal form~\cite{HermiteNF}. All such matrices $C$ of given determinant can be generated and the resulting sublattices tested using \cref{lem:K} and \cref{prop:sublattice check}. We use a slightly modified version of the Hermite normal form where the entries above a pivot $l$ are from the set $-\lfloor\tfrac l2\rfloor+\{0,\dots,l-1\}$ instead of $\{0,\dots,l-1\}$. This increases the chances that there is a ``small'' linear combination of the vectors from the sublattice basis (see~(i) of \cref{prop:sublattice check}) which belongs to the forbidden nodes, and speeds up the computations. %For linear colorings, see the scripts ``4dim exh linear ...'' at \cite{ABPR-github}, which are pretty much direct verification through all possible coefficients of the linear function to be found. 
\end{proof}

	\section{Constructions using covering-packing ratio}\label{sec:covering-packing}

	In this section we will prove \cref{thm:3n,thm:evendims} using \cref{prop:covering-packing2,thm:7n}, respectively, by describing certain appropriate lattices.

	%\vskip3mm

	%\section{Covering-packing ratio and Chromatic number}

	\subsection{Laminated lattices and proof of \cref{thm:3n}}

 %In Table~\ref{tab:sqrt3} we provide covering-packing ratio (or upper bounds on it) for laminated lattices $\Lambda_n$ in dimensions $9\leq n \leq 38$.

	A laminated lattice $\Lambda_n$ is a full rank lattice in $\mathbb{E}^n$ that is obtained recursively in the following way. $\Lambda_1=2\mathbb{Z}$, and a laminated lattice $\Lambda_{n+1}$ is a full rank lattice in $\mathbb{E}^{n+1}$ containing some laminated $\Lambda_n$, such that packing radius of $\Lambda_{n+1}$ is equal to $1$ and the volume of Voronoi cell of $\Lambda_{n+1}$ is minimal possible. 

	Notice that laminated lattice $\Lambda_{n}$ may not be unique, so $\Lambda_{n}$ formally is a collection of lattices. For example for $n\leq 24$ laminated lattices $\Lambda_n$ are unique, except for $n=11$ (2 lattices $\Lambda_{11}^{\min}, \Lambda_{11}^{\max}$), $n=12$ ($\Lambda_{12}^{\min}, \Lambda_{12}^{\text{mid}}, \Lambda_{12}^{\max}$) and $n=13$ ($\Lambda_{13}^{\min}, \Lambda_{13}^{\text{mid}}, \Lambda_{13}^{\max}$), see~\cite[Figure 6.1]{ConSloane}. Additionally, since packing radius of any $\Lambda_{n}$ is equal to $1$, the covering-packing ratio of any $\Lambda_{n}$ coincides with the covering radius. A comprehensive exposition on laminated lattices can be found in~\cite{ConSloane}*{Ch.~6}.

			\begin{table}
	\begin{center}
		\begin{tabular}{| c | c | c | c |}
			\hline
			$n$ & covering-packing ratio & $n$ & covering-packing ratio \\
			\hline
			$9$ & $\sqrt{5/2}\approx 1.581138$ & $24$ & $\sqrt{2}\approx 1.414213$\\
			$10$ & $\sqrt{8/3}\approx 1.632993$ & $25$ & $\sqrt{5/2}\approx 1.581138$\\
			$11$ & $\sqrt{3}\approx 1.732050$ & $26$ & $\sqrt{8/3}\approx 1.632993$\\
			$12$ & $\sqrt{3}\approx 1.732050$ & $27$ & $\sqrt{3}\approx 1.732050$\\
			$13$ & $\sqrt{13}/2\approx 1.802776$ & $28$ & $\sqrt{3}\approx 1.732050$\\
			\blue{$14$} & \blue{$\leq \sqrt{55}/4\approx 1.854050$} & $29$ & $\sqrt{13}/2\approx 1.802776$ \\
			\blue{$15$} & \blue{$\leq \sqrt{173/48} \approx 1.898465$} & \blue{$30$} & \blue{$\leq \sqrt{55}/4\approx 1.854050$}\\
			$16$ & $\sqrt{3}\approx 1.732050$ & \blue{$31$} & \blue{$\leq \sqrt{173/48} \approx 1.898465$}\\
			$17$ & $\sqrt{13}/2\approx 1.802776$ & $32$ & $\sqrt{3}\approx 1.732050$\\
			\blue{$18$} & \blue{$\leq \sqrt{55}/4 \approx 1.854050$} & $33$ & $\sqrt{13}/2\approx 1.802776$\\
			\blue{$19$} & \blue{$\leq \sqrt{173/48} \approx 1.898465$} & \blue{$34$} & \blue{$\leq \sqrt{55}/4\approx 1.854050$}\\
			\blue{$20$} & \blue{$\leq \sqrt{179/48} \approx 1.931106$} & \blue{$35$} & \blue{$\leq \sqrt{173/48} \approx 1.898465$} \\
			\blue{$21$} & \blue{$\leq \sqrt{185/48} \approx 1.963204$} &\blue{$36$} & \blue{$\leq \sqrt{179/48} \approx 1.931106$} \\
			\blue{$22$} & \blue{$\leq \sqrt{379/96} \approx 1.986937$} & \blue{$37$} & \blue{$\leq \sqrt{185/48} \approx 1.963204$}\\
			\blue{$23$} & \blue{$\leq 1.936501$} & \blue{$38$} & \blue{$\leq \sqrt{379/96} \approx 1.986937$}  \\
			\hline
		\end{tabular}
	\end{center}
	\caption{Covering-packing ratios of laminated lattices $\Lambda_n$. In the dimensions where $\Lambda_{n}$ is not unique the maximum possible covering ratio is listed. If covering radius is not known, an upper bound on the covering ratio is listed (blue values). }
	\label{tab:sqrt3}
\end{table}

	\begin{proposition}\label{prop:mid dim cp 2}
		The covering-packing ratios of laminated lattices in dimensions $9\leq n\leq 38$ satisfy the equalities or the upper bounds given in Table~\ref{tab:sqrt3}.
	\end{proposition}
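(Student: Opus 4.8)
The plan is to reduce the entire table to the single recursive inequality
\[
R_{n+1}^2 \le R_n^2 + \tfrac14\,h_{n+1}^2,
\]
where $R_n$ denotes the covering radius of a laminated lattice $\Lambda_n$ (normalized, as throughout, so that the packing radius is $1$, whence $R_n$ equals the covering--packing ratio), and $h_{n+1}$ is the interlayer spacing of the lamination $\Lambda_n\subset\Lambda_{n+1}$. This spacing is forced by volume: every laminated lattice in a fixed dimension has the same minimal determinant $d_n$, each $\Lambda_{n+1}$ arises by stacking copies of a laminated $\Lambda_n$ as its bottom layer, and $\Vol(\Lambda_{n+1})=\Vol(\Lambda_n)\,h_{n+1}$, so $h_{n+1}^2=d_{n+1}/d_n$, a number I would read off the determinants tabulated in \cite{ConSloane}*{Ch.~6}. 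First I would record, for the dimensions at hand, the exact covering radii already in the literature (the non-highlighted entries of \cref{tab:sqrt3}, e.g. $R_{16}^2=3$ for Barnes--Wall $\Lambda_{16}$ and $R_{24}^2=2$ for the Leech lattice) directly from \cite{ConSloane}, together with the few spacings I will actually need, namely $h^2=\tfrac34,\tfrac23,\tfrac12,\tfrac38$.

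Next I would prove the inequality. Write $\E^{n+1}=H\oplus\mathbb{R}e$ with $H$ the hyperplane carrying $\Lambda_n$ and $e$ a unit normal, so that $\Lambda_{n+1}=\bigcup_{k\in\Z}\bigl(\Lambda_n+kg+kh_{n+1}e\bigr)$ for a glue vector $g\in H$. Given $x=y+se\in\E^{n+1}$, choose the integer $k$ minimizing $|s-kh_{n+1}|$, so $|s-kh_{n+1}|\le h_{n+1}/2$, and pick a point $p$ of the coset $\Lambda_n+kg$ within in-plane distance $R_n$ of $y$ (possible since $R_n$ is the covering radius of every translate of $\Lambda_n$). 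Then $p+kh_{n+1}e\in\Lambda_{n+1}$ and
\[
\dist(x,\Lambda_{n+1})^2\le |y-p|^2+|s-kh_{n+1}|^2\le R_n^2+\tfrac14 h_{n+1}^2,
\]
which gives the claim upon taking the supremum over $x$. The key features are that the right-hand side is monotone in $R_n$, so an upper bound on $R_n$ propagates to one on $R_{n+1}$, and that since all laminated lattices of a fixed dimension share $d_n$ and every $\Lambda_{n+1}$ contains a laminated bottom layer, the inequality bounds the \emph{maximal} covering radius in dimension $n+1$ by that in dimension $n$ — precisely the quantity \cref{tab:sqrt3} records in the non-unique dimensions.

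It then remains to iterate from the known exact values. I would run the chains $R_{13}\to R_{14}\to R_{15}$ and $R_{17}\to R_{18}\to\cdots\to R_{22}$: for instance $R_{14}^2\le \tfrac{13}{4}+\tfrac14\cdot\tfrac34=\tfrac{55}{16}$, then $R_{15}^2\le\tfrac{55}{16}+\tfrac14\cdot\tfrac23=\tfrac{173}{48}$, and continuing with $h^2=\tfrac12,\tfrac12,\tfrac38$ one gets $R_{20}^2\le\tfrac{179}{48}$, $R_{21}^2\le\tfrac{185}{48}$, $R_{22}^2\le\tfrac{379}{96}$, exactly the table. The range $25\le n\le 38$ is handled identically: the laminated tower above the Leech lattice is built with the same sequence of spacings, so the chains $R_{29}\to R_{30}\to R_{31}$ and $R_{33}\to R_{34}\to\cdots\to R_{38}$, starting from the known $R_{29}^2=R_{33}^2=\tfrac{13}{4}$, reproduce the mirrored entries. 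All of this is routine arithmetic once the spacings and the exact anchor values are in hand.

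The one place where the scheme breaks down, and what I expect to be the main obstacle, is $n=23$: the upward recursion from $R_{22}^2\le\tfrac{379}{96}$ yields a bound \emph{larger} than the claimed $R_{23}^2\le\tfrac{15}{4}$, so it is useless here. Since $\Lambda_{23}$ sits as a codimension-one section of the Leech lattice $\Lambda_{24}$, the bound $R_{23}\le\sqrt{15}/2$ must instead come from a downward argument exploiting $R_{24}=\sqrt2$ together with the position of the glue vector (equivalently, the known classification of the deep holes of the Leech lattice), or simply be quoted from the covering-radius literature for $\Lambda_{23}$. Establishing this single value cleanly — as opposed to the mechanical verification of the other fourteen highlighted entries — is the crux of the argument, and is the step I would spend the most care on.
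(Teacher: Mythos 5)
Your core recursion is exactly the paper's inequality \eqref{ineq:rhon}: your $R_{n+1}^2\le R_n^2+\tfrac14 h_{n+1}^2$ is the same bound, with the layer spacings (your $h_{n+1}$, the paper's $\sqrt{\pi_n}$ --- not to be confused with the paper's $h_n$, which denotes the \emph{subcovering radius} of \cite{ConSloane}*{Table~6.1}) read from determinant quotients rather than directly from \cite{ConSloane}*{Table~6.1}, which is the same data. Your chains and arithmetic for $n\in\{14,15,18,\dots,22\}$ and their mirrors $n\in\{30,31,34,\dots,38\}$ match the paper's, and your observation that the recursion propagates bounds on the \emph{maximal} covering radius over all laminated lattices of a fixed dimension is precisely the point the paper relies on (cf.\ its remark that $\Lambda_{13}^{\text{mid}}$ does not extend to $\Lambda_{14}$, so only the maximal value $\rho_{13}$ may be fed into the recursion). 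However, there is one genuine gap: your anchors at $n\in\{13,17,29,33\}$. You propose to quote these exact values from the literature, but \cite{ConSloane}*{Th.~1, p.~164} certifies the covering radius only when the subcovering radius satisfies $h_n\le\sqrt3$, which fails for $\sqrt{13}/2\approx 1.803$; these four values are not directly quotable. The paper instead anchors at $n\in\{12,16,28,32\}$ (where the theorem does apply), derives the \emph{upper} bound $\rho_n\le\sqrt{13}/2$ at $n\in\{13,17,29,33\}$ from the recursion itself, and then gets equality from the lower bound $h_n\le\rho_n$ together with $h_n=\sqrt{13}/2$ from \cite{ConSloane}*{Table~6.1}. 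As written, the equality rows of \cref{tab:sqrt3} at these four dimensions are unproven in your proposal, though the repair stays entirely within your own framework: start each chain one dimension earlier and add the one-line lower-bound argument.

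On $n=23$ you correctly diagnose that the recursion is useless, but your preferred fix --- a ``downward'' argument from $R_{24}=\sqrt2$ and the deep holes of the Leech lattice --- would not be routine: a codimension-one section of a lattice does not inherit covering bounds from the ambient lattice, since the section must cover its hyperplane by its \emph{own} translates, which $R_{24}$ does not control. The paper does exactly what you offer as the fallback: it cites the computed covering radius of $\Lambda_{23}$ from \cite{SikSchVall}*{Table~2}. This is also why \cref{tab:sqrt3} records the numerical bound $1.936501$ rather than the exact value $\sqrt{15}/2$ you conjecture; the paper's own remark notes it is unclear whether the computations of \cite{SikSchVall} were performed in exact arithmetic, so only the slightly weaker numerical bound is claimed.
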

	\begin{proof}
		Let $\rho_n$ denote the greatest covering radius of any laminated lattice $\Lambda_n$. By \cite{ConSloane}*{Th.~1, p.~164}, we obtain the exact values of $\rho_n$ for $n\in\{9,10,11,12,16,24,25,26,27,28,32\}$ as listed in \cref{tab:sqrt3}. More precisely, these values are equal to the so-called subcovering radius $h_n$ from \cite{ConSloane}*{Table~6.1}   provided $h_n\le\sqrt{3}$ (see \cite{ConSloane}*{Th.~1, p.~164} for more details).
		
		Next we note (see~\cite{ConSloane}*{p.~163}) that each $\Lambda_n$ can be represented in $\E^n$ as $\cup_{j\in\Z}\Lambda_{n-1}^{(j)}$, where $\Lambda_{n-1}^{(j)}$ is a translate of a certain laminated lattice such that any point of $\Lambda_{n-1}^{(j)}$ has $n$-th coordinate equal to $j\sqrt{\pi_{n-1}}$. The values of $\pi_{n-1}$ are given in \cite{ConSloane}*{Table~6.1}. Now for any point $x=(x_1,\dots,x_n)\in \E^n$ we can find $j$ minimizing $|x_n-j\sqrt{\pi_{n-1}}|$ and then a lattice node $y\in\Lambda_{n-1}^{(j)}$ with $|(x_1,\dots,x_{n-1},j\sqrt{\pi_{n-1}})-y|\le \rho_{n-1}$. This yields
		\begin{equation}\label{ineq:rhon}
		\rho_{n}\leq \sqrt{\frac{\pi_{n-1}}{4}+\rho_{n-1}^{2}}.
		\end{equation}
		Starting with the already obtained values of $\rho_n$ for $n\in\{12,16,28,32\}$, consecutive applications of \eqref{ineq:rhon} imply the upper bounds on $\rho_n$ for all the remaining values of $n$ in \cref{tab:sqrt3} except for $n=23$. The equalities for $n\in\{13,17,29,33\}$ follow from the lower bound $h_n\le\rho_n$ and the values of $h_n$ given in \cite{ConSloane}*{Table~6.1}. Finally, the inequality for $n=23$ is a consequence of the computational result~\cite{SikSchVall}*{Table~2}.
	\end{proof}
	\begin{remark}
	In~\cite{SikSchVall}, numerical values of $\rho_n$ appear to match $h_n$ for $n\in\{14,15,18,23\}$, which suggests that possibly $\rho_n=h_n$ in these cases. However, it is not clear from~\cite{SikSchVall} whether the computations were performed using exact arithmetic and the results were converted to numerical ones only at the end. Also, according to~\cite{SikSchVall}, the covering radius $\rho_{13}^{\text{mid}}$ of $\Lambda_{13}^{\text{mid}}$ satisfies $\rho_{13}^{\text{mid}}\approx\sqrt{3}<\rho_{13}=\sqrt{13}/2$. However, $\Lambda_{13}^{\text{mid}}$ is not extendable to $\Lambda_{14}$ (see~\cite{ConSloane}*{Figure~6.1}), so~\eqref{ineq:rhon} is not applicable with $n=14$ and $\rho_{13}$ replaced by $\rho_{13}^{\text{mid}}$.
	\end{remark}
	
	Laminated lattices provide a recursive construction that allows to obtain lattices with a small covering-packing ratio. We now will consider how the covering-packing ratio behaves when we take a sum of lattices. 
	
	First, one can consider a direct sum $\Lambda_1\oplus \Lambda_2$ of two lattices $\Lambda_1$ and $\Lambda_2$ with covering-packing ratios $\rho_1$ and $\rho_2$ respectively. It can be easily seen that the covering-packing ratio of $\Lambda_1\oplus \Lambda_2$ does not exceed $\sqrt{\rho_1^2+\rho_2^2}$. For instance, by using the Leech lattice $\Lambda_{24}$, we obtain a lattice $\Lambda_{24}\oplus \Lambda_{24}$ with covering-packing ratio at most $2$ in dimension $n=48$.
	
	Other direction would be to consider a $\pi/3$-sum of two lattices, instead of the direct sum. We summarize  this approach in the following proposition.
	
	\begin{proposition}\label{prop:60degsum}
		For $i=1,2$ let $\Lambda_i$ be a full rank lattice in $\E^{n_i}$ with a covering-packing ratio $\rho_i$. Provided $n_1\geq n_2$, there exists a full rank lattice $\Lambda$ in $\E^{n_1+n_2}$ with the covering-packing ratio not exceeding $\sqrt{\rho_{1}^{2}+\frac{3}{4}\rho_{2}^{2}}$.
	\end{proposition}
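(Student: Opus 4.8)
The plan is to realize the $\pi/3$-sum as a single shear of the orthogonal direct sum $\Lambda_1\oplus\Lambda_2$, in which the copy of $\Lambda_2$ is simultaneously rescaled by $\tfrac{\sqrt3}{2}$ and tilted into the $\Lambda_1$-block, so that its minimal vectors meet the $\Lambda_1$-layers at angle $\pi/3$ (whence the name, and the coefficient $\tfrac34=\sin^2(\pi/3)$). Since the covering-packing ratio is scale invariant, I would first rescale both lattices to have packing radius $1$, so that their covering radii equal $\rho_1,\rho_2$ and their minimal vectors have length $2$. I would then fix a $\Z$-linear map $h\colon\Lambda_2\to\Lambda_1$, set $\phi:=\tfrac12 h$ (extended $\R$-linearly to $\E^{n_2}\to\E^{n_1}$), and define
\[
\Lambda:=\Bigl\{\bigl(a+\phi(w),\ \tfrac{\sqrt3}{2}\,w\bigr)\ :\ a\in\Lambda_1,\ w\in\Lambda_2\Bigr\}\subset\E^{n_1}\times\E^{n_2}=\E^{n_1+n_2}.
\]
As $(a,w)\mapsto(a+\phi(w),\tfrac{\sqrt3}{2}w)$ is an invertible linear map, $\Lambda$ is automatically a full rank lattice; the map $h$ will be chosen below to preserve the packing radius.

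Covering is the easy direction and is insensitive to the choice of $\phi$. Given a target $(y,z)$, I would first pick $w\in\Lambda_2$ with $|z-\tfrac{\sqrt3}{2}w|\le\tfrac{\sqrt3}{2}\rho_2$ (possible because the projection of $\Lambda$ to $\E^{n_2}$ is $\tfrac{\sqrt3}{2}\Lambda_2$, whose covering radius is $\tfrac{\sqrt3}{2}\rho_2$), and then, within that layer, pick $a\in\Lambda_1$ with $|y-\phi(w)-a|\le\rho_1$ (possible since the layer is the translate $\Lambda_1+\phi(w)$). As the two blocks are orthogonal, the squared distance from $(y,z)$ to $\Lambda$ is at most $\rho_1^2+\tfrac34\rho_2^2$, so the covering radius is at most $\sqrt{\rho_1^2+\tfrac34\rho_2^2}$.

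The main obstacle is to show the shear does not shrink the packing radius below $1$, and this is exactly where the choice of $h$ and the hypothesis $n_1\ge n_2$ enter. A nonzero vector of $\Lambda$ lying over the layer $w$ has squared length $\dist(\phi(w),\Lambda_1)^2+\tfrac34|w|^2$. For $w=0$ this is $\ge4$ since $\Lambda_1$ has minimal vector $2$, and for $|w|^2\ge\tfrac{16}{3}$ the second term alone is $\ge4$; the delicate range is $4\le|w|^2<\tfrac{16}{3}$, where I must guarantee $\dist(\phi(w),\Lambda_1)\ge1$. The key elementary fact is that $\dist(\tfrac12 a,\Lambda_1)\ge1$ whenever $a\in\Lambda_1\setminus 2\Lambda_1$: indeed $\dist(\tfrac12 a,\Lambda_1)=\tfrac12\dist(a,2\Lambda_1)$, and every element of the nonzero coset $a+2\Lambda_1\subset\Lambda_1$ has length at least the minimal vector length $2$. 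Hence it suffices to ensure $h(w)=2\phi(w)\in\Lambda_1\setminus 2\Lambda_1$ for every short $w$. Now any $w$ with $|w|^2<\tfrac{16}{3}$ satisfies $w\notin 2\Lambda_2$, and $h$ induces an $\mathbb F_2$-linear map $\bar h\colon\Lambda_2/2\Lambda_2\to\Lambda_1/2\Lambda_1$, so it is enough to pick $h$ with $\bar h$ injective. Because $n_1\ge n_2$, I can take $h$ to send a basis of $\Lambda_2$ to $n_2$ basis vectors of $\Lambda_1$, making $\bar h$ the standard inclusion $\mathbb F_2^{n_2}\hookrightarrow\mathbb F_2^{n_1}$, which is injective.

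Combining the two bounds, $\Lambda$ has packing radius $\ge1$ and covering radius $\le\sqrt{\rho_1^2+\tfrac34\rho_2^2}$, so its covering-packing ratio is at most $\sqrt{\rho_1^2+\tfrac34\rho_2^2}$, as claimed. I expect the genuinely nontrivial point to be the packing estimate, and within it the realization that one should route the layer-shifts into $\tfrac12\Lambda_1$ along a homomorphism that stays nonzero modulo $2\Lambda_1$; the rescaling factor $\tfrac{\sqrt3}{2}$ is then precisely what makes the minimal vectors of the tilted $\Lambda_2$ have length $2$ while contributing only $\tfrac34\rho_2^2$ to the squared covering radius.
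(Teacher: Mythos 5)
Your proof is correct, but the packing estimate---which, as you say, is the crux---is established by a genuinely different mechanism than in the paper, even though the construction itself belongs to the same family (a shear of $\Lambda_1\oplus\tfrac{\sqrt3}{2}\Lambda_2$) and the covering argument is identical. The paper shifts the layer over $y\in\Lambda_2$ by $\tfrac12 Ly$, where $L\colon\E^{n_2}\to\E^{n_1}$ is the isometric embedding $y\mapsto(y,0,\dots,0)$ (this is where $n_1\ge n_2$ enters there), and then bounds $|v|^2=|x|^2+x\cdot Ly+|y|^2\ge |x|^2-|x||y|+|y|^2$ by Cauchy--Schwarz, reproducing the hexagonal quadratic form; no compatibility between $\Lambda_2$ and $\Lambda_1$ is required, only $|Ly|=|y|$. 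You instead route the shifts into $\tfrac12\Lambda_1$ via a $\Z$-linear $h\colon\Lambda_2\to\Lambda_1$ chosen injective modulo $2$ (where $n_1\ge n_2$ enters for you, purely as an $\mathbb{F}_2$-rank condition), and use the coset fact $\dist(\tfrac12 a,\Lambda_1)=\tfrac12\dist(a,2\Lambda_1)\ge 1$ for $a\in\Lambda_1\setminus 2\Lambda_1$ together with the observation that any $w\ne 0$ with $|w|^2<16/3$ lies outside $2\Lambda_2$; your case analysis ($w=0$; $\tfrac34|w|^2\ge 4$; the delicate band $4\le|w|^2<16/3$, where $\dist(\phi(w),\Lambda_1)\ge 1$ gives $1+3=4$) is airtight, as are the full-rank and covering verifications. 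The paper's route buys brevity and a clean geometric picture (the minimal vectors of the tilted copy of $\Lambda_2$ meet the $\Lambda_1$-layers at angle $\pi/3$); yours buys flexibility, since it isolates the only property the shift map needs---$\dist(\phi(w),\Lambda_1)\ge 1$ for the finitely many short nonzero $w$---so any mod-$2$-injective homomorphism works, not just an isometric lift, at the cost that the resulting lattice (basis sent to basis vectors) is generally a different, less symmetric lattice than the paper's $\pi/3$-sum. Both arguments yield packing radius at least $1$ and covering radius at most $\sqrt{\rho_1^2+\tfrac34\rho_2^2}$, hence the claimed bound.
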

	
	\begin{proof}
		We write $({x},{y})\in \E^{n_1+n_2}$ with ${x}\in \E^{n_1}$ and ${y}\in \E^{n_2}$. Define a lift operation $L:\E^{n_2}\to \E^{n_1}$ by $f({y})=({y}, 0,\ldots,0)$. 
		
		Assume that packing radii of lattices $\Lambda_1$ and $\Lambda_2$ are equal to $1$. Let $\Lambda$ be a $\frac{\pi}{3}$-sum of lattices $\Lambda_1$ and $\Lambda_2$ defined by $$\Lambda=\left\{\left({x}+\frac{L{y}}{2}, \frac{\sqrt{3}}{2}{y}\right) \; : \; {x}\in \Lambda_1, {y} \in \Lambda_2\right\}.$$
		
		It is easy to see that $\Lambda$ is a full rank lattice in $\E^{n_1+n_2}$. 
		
		Now we show that the packing radius of $\Lambda$ is equal to $1$. It is enough to show that any nonzero vector ${v}=({x}+\frac{L{y}}{2}, \frac{\sqrt{3}}{2}{y})$ of $\Lambda$ has length at least 2. We have
		$$\left|{v}\right|^2=\left|{x}+\frac{L{y}}{2}\right|^2+\left|\frac{\sqrt{3}}{2}{y}\right|^2=\left|{x}\right|^2+{x}\cdot L{y}+\left|{y}\right|^2\geq \left|{x}\right|^2-\left|{x}\right|\left|{y}\right|+\left|{y}\right|^2.$$
		Now, if one of ${x}$  or ${{y}}$ iz a zero vector, then clearly $|{{v}}|^2\geq 4$. If both ${x}$ and ${y}$ are non-zero, then $|{x}|\geq 2$, $|{y}|\geq 2$, and so 
		$$\left|{{v}}\right|^2\geq \frac{1}{2}\left|{x}\right|^2+\frac{1}{2}\left|{y}\right|^2\geq 4.$$
		
		To estimate the covering radius of $\Lambda$, let $(\hat{x}, \frac{\sqrt{3}}{2} \hat{y})$ be an arbitrary point of $\E^{n_1+n_2}$.	Since covering radius of $\Lambda_2$ is $\rho_2$, let ${y}\in \Lambda_2$ be such that $|\hat{y}-{y}|\leq \rho_2$. Similarly let ${x}\in \Lambda_1$ be such that $\left|\hat{x}-{x}-\frac{L{y}}{2}\right|\leq \rho_1$. Then $({x}+\frac{L{y}}{2}, \frac{\sqrt{3}}{2}{y})\in \Lambda$ and 
		$$\left|(\hat{x}, \frac{\sqrt{3}}{2}\hat{y})-({x}+\frac{L{y}}{2}, \frac{\sqrt{3}}{2}{y})\right|\leq \sqrt{\rho_1^{2}+\frac{3}{4}\rho_2^2}.$$
	\end{proof}

\begin{proof}[Proof of \cref{thm:3n}.]
	Due to \cref{prop:covering-packing2}, it suffices to show existence of a lattice with covering-packing ratio at most $2$ for each required dimension $n$.
	
	For $n\le 8$ the lattices with the smallest known covering-packing ratio are listed in~\cite{Sch-Va}*{Table~3, Table~4}; the ratii are strictly smaller than $2$. For $9\le n\le 38$, we use \cref{prop:mid dim cp 2}. 
	
	For $n=48$, by Proposition~\ref{prop:60degsum} the $\frac{\pi}{3}$-sum $\Lambda=\Lambda_{24}\oplus_{\pi/3}\Lambda_{24}$ of two Leech lattices has the covering-packing ratio at most $\sqrt{\frac{7}{2}}$. (Note that $\Lambda=\Lambda_{24}\oplus\Lambda_{24}$ can also be used as it has the covering-packing ratio at most $2$.)
	
	Finally, for $n=49$,  by Proposition~\ref{prop:60degsum} the $\frac{\pi}{3}$-sum $\Lambda=\Lambda_{25}\oplus_{\pi/3}\Lambda_{24}$ of two laminated lattices has the covering-packing ratio not exceeding $2$.	  
\end{proof}

	\subsection{Eisenstein lattices and proof of \cref{thm:evendims}}
	
	\begin{proof}[Proof of \cref{thm:evendims}.]
		By \cref{thm:7n}, for each dimension in question we only need to find an Eisenstein lattice with covering-packing ratio at most $3/2$. We list suitable lattices in the following table.
			\begin{center}
			\begin{tabular}{|c|c|c|c|}
				\hline
				$n$ & lattice & covering-packing ratio & reference \\
				\hline
				$2$ & $A_2$ & $2/\sqrt{3}$ & \cite{ConSloane}*{p.~110} \\
				$4$ & $D_4$ & $\sqrt{2}$ &  \cite{ConSloane}*{p.~119} \\
				$6$ & $E_6^*$ & $\sqrt{2}$ &  \cite{ConSloane}*{p.~127} \\
				$8$ & $E_8$ & $\sqrt{2}$ &  \cite{ConSloane}*{p.~121, p.~161} \\
				$24$ & $\Lambda_{24}$ & $\sqrt{2}$ &  \cite{ConSloane}*{p.~161} \\
				\hline			
			\end{tabular}
		\end{center}
%		 For $n=2$, take the hexagonal lattice $A_2$ which is clearly an Eisenstain lattice and has covering-packing ratio $2/\sqrt{3}$. For $n=4$, $D_4$ is an Eisenstain lattice with covering-packing ratio $\sqrt{2}$, see \cite{ConSloane}*{p.~118}. For $n=6$, $E_6^*$ has covering-packing ratio $\sqrt{2}$  and is an Eisenstain lattice \cite{ConSloane}*{p.~127}. For $n=8$ and $n=24$, the same is true for $E_8$ and $\Lambda_{24}$, respectively, see~\cite{ConSloane}*{p.~121, p.~161}.
	\end{proof}

%	Table~\ref{tab:sqrt3} (for case $9\leq n\leq 38$), together with~\cite{Sch-Va}*{Table 3, Table 4} (for $n\leq 8$), immediately imply the following.
%
%	\begin{proposition}\label{obs:n<38}
%		For each dimension $n\leq 38$, there is a lattice with covering-packing ratio less than $2$.
%	\end{proposition}
%
%	Using \cref{obs:n<38,prop:covering-packing2} we can finally prove Theorem~\ref{thm:3n}.
%	\begin{proof}[Proof of Theorem~\ref{thm:3n}]
%		For a given $n$ let $\Lambda^{\prime}$ be the lattice given by \cref{obs:n<38} (and let packing radius of $\Lambda^{\prime}$ be $1$). Set $\Lambda=\frac{1}{3}\Lambda^{\prime}$ and let $K$ be the Voronoi cell of $\Lambda$ centered at $O$.
%
%		Since $\Lambda^{\prime}$ has covering-packing ratio $\rho<2$, the diameter of $K$ is $\frac{2}{3}\rho$. Additionally, for $v_1,v_2\in \Lambda^{\prime}$, we have $\dist(v_1+K,v_2+K)$ does not exceed $\frac{4}{3}$. Hence, for every $d\in (\frac{2}{3}\rho, \frac{4}{3})$ the following two properties hold:
%		\begin{itemize}
%			\item[(a)] Diameter of $K$ is  less than $d$.
%			\item[(b)] $\dist(v_1+K,v_2+K)\leq d$ for all $v_1,v_2\in \Lambda^{\prime}$. 
%		\end{itemize}
%		By \cref{obs:coulson}, 
%		$$\chi(\mathbb{E}^{n})\leq |\Lambda/ \Lambda^{\prime}|=3^{n}.$$
%	\end{proof}

	\section{Open questions}\label{sec:questions}

		\begin{openquestion}
		Do there exist better sublattice coloring schemes (perhaps for other lattices) than those in \cref{thm:An_star,thm:evendims} for $4\le n\le 9$?
	\end{openquestion}
	
	We believe that the answer is negative for $n=4,5$, i.e. $\min \chi_s(\E^4,\Lambda,\Lambda',\ell)=49$ and $\min \chi_s(\E^5,\Lambda,\Lambda',\ell)=140$, where the minima are taken over all full rank lattices $\Lambda$, sublattices $\Lambda'$ and all $\ell>0$.
	
	\begin{openquestion}
		Is it possible to extend the result of \cref{thm:evendims} to other dimensions? In particular, does there exist an Eisenstein lattice with covering-packing ratio at most $3/2$ in any other dimension than listed in~\cref{thm:evendims}?
	\end{openquestion}
	\begin{openquestion}
		Is it possible to obtain a new upper bound on $\chi(\E^n)$ for some $n$ using a modification of the technique of \cref{thm:7n}, perhaps using Hurwitz quaternionic integers~\cite{ConSloane}*{Sect.2.2.6, p.~53}?
	\end{openquestion}

	Everywhere below in this section $R$ denotes the covering radius of a lattice $\Lambda$.
	
	\begin{openquestion}
		Is it true that for every $n$ there exists a full rank lattice $\Lambda$ and a sublattice $\Lambda'\subset\Lambda$ such that $\chi_s(\E^n,\Lambda,\Lambda',2R)<3^n$?
	\end{openquestion}

	An affirmative answer immediately implies $\chi(\E^n)<3^n$. The results in this paper give an affirmative answer for $n\le 9$.

	\begin{openquestion}\label{open:cov-pack}
		Find all dimensions $n$ for which there exists a full rank lattice in $\E^n$ with covering/packing ratio at most $2$.
	\end{openquestion}

	For any such dimension we immediately have $\chi(\E^n)\le\chi_s(\E^n,\Lambda,3\Lambda,2R)= 3^n$ by \cref{prop:covering-packing2}. It is believed (see~\cite[Problem 4.1]{Sch-Va}, \cite[Problem 1.1, 1.2]{Zong}, \cite[Question 3.2]{Muraz}, \cite[p.63 Problem 6]{RPDG}) that for a sufficiently large $n$ any lattice in $\E^n$ has covering/packing ratio {\it at least} $2$, i.e. there are non-lattice packings of $\E^n$ which are denser than lattice packings. Notice that discrete sets with covering-packing ratio at most $2$ having non-lattice structure can be easily constructed in any dimension using maximal separated sets in a torus. On the other hand, existence of a lattice $\Lambda$ in $\E^n$ with a covering-packing ratio $2+o(1)$ as $n\to\infty$ was established by Butler~\cite{Butler}.

	\begin{openquestion}
	Find all $n$ such that there exists a laminated lattice $\Lambda_n$ with covering radius at most $2$.
\end{openquestion}	
We know that all dimensions $n$ up to $38$ satisfy this property. The laminated lattices construction is based on maximizing the density of the packing, while in our context the covering radius is of interest. 
\begin{openquestion}
	Is it possible to modify the construction of laminated lattices to give an affirmative answer to \cref{open:cov-pack} for some new values of $n$?
\end{openquestion}

%	\begin{openquestion}
%		Which (computable) characteristics of a lattice are indicators of existence of a sublattice colorings with a few colors? Is it possible to prove any formal results relating the number of colors and the characteristics?  
%	\end{openquestion}
%	We note that our search for obtaining Theorem~\ref{thm:An_star} was done by first generating the forbidden set of lattice points in $A_{n}^{*}$ (and $D_4$), then choosing a ``short'' basis for a sublattice, and finally testing that no linear combination of the sublattice basis vectors belongs to the forbidden set. Empirically,  having a relatively small forbidden set led to the smaller index of a sublattice and to better bounds on chromatic number. 
%
%	A good estimate on the number of forbidden nodes in a lattice, by a volume argument is the quotient $|2V+\ball(2R)|/|V|$, where $V$ is a Voronoi cell, and $R$ is the covering radius of a lattice. Estimating the volume of $2V+\ball(R)$ may be hard, so we believe that also a good parameter could be the covering density of the lattice $|\ball(R)|/|V|$.
	
%	\begin{openquestion}
%		What are other algorithms/computational strategies for: (a) generating the set of forbidden nodes for a given lattice; (b) verifying that a given sublattice contains no forbidden nodes; (c) finding such sublattices that have a small index?
%	\end{openquestion}
%	We note that the forbidden set is usually ``ball-like'', and so (b) is likely to be related to the problem of finding the shortest vector in the sublattice, a notoriously computationally hard problem, see e.g.~\cite{wikiSVP}.
	
	Although sublattice colorings allow to prove upper bounds on $\chi(\E^n)$ better than $3^{n}$ in many dimensions, study of their limitations is of separate interest.
	\begin{openquestion}
		Is it possible to improve the Coulson's bound $\chi_s(\E^n,\Lambda,\Lambda',\ell)\ge 2^{n+1}-1$ from~\cite{Coulson}, at least for some $n$, or for the case $\ell=2R$, or allowing the dependence of the bound on some additional characteristics of the lattice?
	\end{openquestion}

	\begin{openquestion}
		Does the inequality $\min\chi_s(\E^n,\Lambda,\Lambda',2R)\le \min \{\chi_s(\E_n,\Lambda,\Lambda',\ell):\ell>2R\}$ hold for any $n$, where the minima are taken over all full-rank lattices $\Lambda$ and sublattices $\Lambda'$? In other words, can we consider only the case $\ell=2R$ when searching for the smallest sublattice coloring chromatic numbers?
	\end{openquestion}
	We observe that a suitable sublattice not only cannot contain nodes from the forbidden set, but also it cannot contain any nodes whose integer multiple belongs to the forbidden set. Therefore, we are inclined to believe that the answer to the previous question is affirmative. 
			
	\begin{openquestion}
		What are other coloring approaches apart from linear or sublattice coloring schemes which can yield (possibly with computer assistance) new upper bounds on $\chi(\E^n)$ for small $n$?
	\end{openquestion}

%
%	extend Th 1 to higher dim? 
%
%	good computational sublattice search. Subproblems include finding shortest vector of a lattice, or checking if a vector belongs to a given forbidden ellipsoid-like set.
%
%	which properties of the lattice control existence of good sublattice covering scheme; seems like not the number of forbidden and not covering-packing ratio; maybe covering/effective ratio?  
%
%	Coulson gives the universal lower bound of $2^{n+1}-1$ for sublattice coloring schemes. Improve it for small $n$, at least when the forbidden distance is the diameter of the Voronoi cell.
%
%	remark about taking larger forbidden distances - the forbidden set contains reducible nodes, so it is unlikely to help. can anything formal be proved along these lines?
%
%	estimate covering packing ratio for subsequent laminated lattices for $n\ge39$, or construct other lattices with covering-packing at most 2.
%
%	Is it true that in any dimension there exists a lattice with covering packing at most 2? reference for an open problem (an opposite I believe is a conjecture)
%
%	Is it true that chromatic number doesn't exceed $3^n$ in any dimension? The best known upperbound is $(1+o(1))n\ln n \cdot 3^{n}$ by Prosanov. 	
%
%	Find other strategies for sublattice search
%	
%	Other coloring approaches besides sublattice

	\begin{bibdiv}
		\begin{biblist}

%			
%			\bibitem{ArTs:18} A. Arman and  S. Tsaturian, \emph{A result in asymmetric Euclidean Ramsey theory}, \href{https://arxiv.org/abs/1702.04799}{arXiv:1702.04799}, accessed 27 November 2017, accepted for publication in \textit{Discrete math}.
%			

	\bib{ABPR-github}{article}{
	author={Arman, A.},
	author={Bondarenko, A},
	author={Prymak, A.},
	author={Radchenko, D.},
	title={Scripts for upper bounds on chromatic number of Euclidean space in small dimensions},
	eprint={https://github.com/andriyprm/ubcnessd/releases/tag/v1}	
}

\bib{Banaszczyk}{article}{
	author={Banaszczyk, W.},
	title={On the lattice packing--covering ratio of finite-dimensional
		normed spaces},
	journal={Colloq. Math.},
	volume={59},
	date={1990},
	number={1},
	pages={31--33},
	issn={0010-1354},
%	review={\MR{1078289}},
%	doi={10.4064/cm-59-1-31-33},
}

			\bib{BoRa}{article}{
				author={Bogolyubski\u{\i}, L. I.},
				author={Ra\u{\i}gorodski\u{\i}, A. M.},
				title={A remark on lower bounds for the chromatic numbers of spaces of
					small dimension with metrics $\ell_1$ and $\ell_2$},
				language={Russian, with Russian summary},
				journal={Mat. Zametki},
				volume={105},
				date={2019},
				number={2},
				pages={187--213},
%				issn={0025-567X},
				translation={
					journal={Math. Notes},
					volume={105},
					date={2019},
					number={1-2},
					pages={180--203},
%					issn={0001-4346},
				},
%				review={\MR{3920399}},
%				doi={10.4213/mzm11736},
			}

%			\bib{Boroczky}{article}{
%				author={B\"{o}r\"{o}czky, K.},
%				title={Closest packing and loosest covering of the space with balls},
%				journal={Studia Sci. Math. Hungar.},
%				volume={21},
%				date={1986},
%				number={1-2},
%				pages={79--89},
%				issn={0081-6906},
%				%	review={\MR{898846}},
%			}

			\bib{Butler}{article}{
				author={Butler, G. J.},
				title={Simultaneous packing and covering in euclidean space},
				journal={Proc. London Math. Soc. (3)},
				volume={25},
				date={1972},
				pages={721--735},
				issn={0024-6115},
			%	review={\MR{319054}},
				%   doi={10.1112/plms/s3-25.4.721},
			}

		\bib{RPDG}{book}{
			author={Brass, Peter},
			author={Moser, William},
			author={Pach, J\'{a}nos},
			title={Research problems in discrete geometry},
			publisher={Springer, New York},
			date={2005},
			pages={xii+499},
%			isbn={978-0387-23815-8},
%			isbn={0-387-23815-8},
%			review={\MR{2163782}},
		}

%			\bib{CF:19}{article}{
%				author={Conlon, David},
%				author={Fox, Jacob},
%				title={Lines in Euclidean Ramsey theory},
%				journal={Discrete Comput. Geom.},
%				volume={61},
%				date={2019},
%				number={1},
%				pages={218--225},
%				issn={0179-5376},
%				review={\MR{3925553}},
%				doi={10.1007/s00454-018-9980-5},
%			}
%			
			\bib{ConSloane}{book}{
				author={Conway, J. H.},
				author={Sloane, N. J. A.},
				title={Sphere packings, lattices and groups},
				series={Grundlehren der Mathematischen Wissenschaften [Fundamental
					Principles of Mathematical Sciences]},
				volume={290},
				edition={3},
				note={With additional contributions by E. Bannai, R. E. Borcherds, J.
					Leech, S. P. Norton, A. M. Odlyzko, R. A. Parker, L. Queen and B. B.
					Venkov},
				publisher={Springer-Verlag, New York},
				date={1999},
				pages={lxxiv+703},
				isbn={0-387-98585-9},
		%		review={\MR{1662447}},
				doi={10.1007/978-1-4757-6568-7},
			}

			\bib{Coulson}{article}{
				author={Coulson, D.},
				title={\emph{A 15-colouring of 3-space omitting distance one}},
				journal={\emph{Discrete Math.}},
				volume={256},
				date={2002},
				%   number={1-2},
				pages={83--90},
				%   issn={0012-365X},
				%   review={\MR{1927057}},
				%   doi={10.1016/S0012-365X(01)00183-2},
			}

			\bib{CoulsonPayne}{article}{
				author={Coulson, D.},
				author={Payne, M. S.},
				title={A dense distance 1 excluding set in $R^3$},
				journal={Austral. Math. Soc. Gaz.},
				volume={34},
				date={2007},
				number={2},
				pages={97--102},
				issn={0311-0729},
			%	review={\MR{2329743}},
			}

%			\bib{CsizToth}{article}{
%				author={Csizmadia, G.},
%				author={T\'oth, G.},
%				title={\emph{Note on a Ramsey-type problem in geometry}},
%				journal={\emph{J. Combin. Theory Ser. A}},
%				volume={65},
%				date={1994},
%				number={2},
%				pages={302--306},
%				%   issn={0097-3165},
%				%   review={\MR{1268343}},
%				%   doi={10.1016/0097-3165(94)90025-6},
%			}

			\bib{deBruijnErdos}{article}{
				author={de Bruijn, N. G.},
				author={Erd\"{o}s, P.},
				title={A colour problem for infinite graphs and a problem in the theory
					of relations},
				journal={Nederl. Akad. Wetensch. Proc. Ser. A. {\bf 54} = Indagationes
					Math.},
				volume={13},
				date={1951},
				pages={369--373},
%				review={\MR{0046630}},
			}

			\bib{deGrey}{article}{
				author={de Grey, Aubrey D. N. J.},
				title={The chromatic number of the plane is at least 5},
				journal={Geombinatorics},
				volume={28},
				date={2018},
				number={1},
				pages={18--31},
				issn={1065-7371},
	%			review={\MR{3820926}},
			}

\bib{Henk}{thesis}{
	author={Henk, M.},
	title={Finite and infinite packings}, 
	type={Doctoral dissertation},
	year={1995}
}

			\bib{LarRog}{article}{
				author={Larman, D. G.},
				author={Rogers, C. A.},
				title={\emph{The realization of distances within sets in Euclidean space}},
				journal={\emph{Mathematika}},
				volume={19},
				date={1972},
				pages={1--24},
				%   issn={0025-5793},
				%   review={\MR{0319055}},
				%   doi={10.1112/S0025579300004903},
			}

\bib{Muraz}{article}{
	author={Muraz, G.},
	author={Verger-Gaugry, J.-L.},
	title={On lower bounds of the density of Delone sets and holes in
		sequences of sphere packings},
	journal={Experiment. Math.},
	volume={14},
	date={2005},
	number={1},
	pages={47--57},
%	issn={1058-6458},
%	review={\MR{2146518}},
}
			
%			\bib{Li}{article}{
%				author={Li, S.},
%				title={\emph{Concise formulas for the area and volume of a hyperspherical cap}},
%				journal={\emph{Asian J. Math. Stat.}},
%				volume={4},
%				date={2011},
%				pages={66--70},
%				%   issn={0025-5793},
%				%   review={\MR{0319055}},
%				%   doi={10.1112/S0025579300004903},
%			}
%			
%			\bib{moser}{article}{
%				author={L. Moser},
%				author={W.O.J. Moser},
%				title={\emph{Solution to Problem 10}}, 
%				journal={\emph{Canad.
%						Math. Bull.}},
%				volume={4}, 
%				date={1961},
%				pages={187--189},
%			}
%			
%			\bib{nechushtan}{article}{
%				author={Nechushtan, O.},
%				title={\emph{Note on the space chromatic number}},
%				journal={\emph{Discrete Math.}},
%				volume={256},
%				date={2002},
%				number={1-2},
%				pages={499--507},
%				%   issn={0012-365X},
%				%   review={\MR{1927575}},
%				%   doi={10.1016/S0012-365X(00)00406-4},
%			}
%			
%			\bib{OrdReWe}{article}{
%				author={Ordentlich, Or},
%				author={Regev, Oded},
%				author={Weiss, Barak},
%				title={New bounds on the density of lattice coverings},
%				journal={J. Amer. Math. Soc.},
%				volume={35},
%				date={2021},
%				number={1},
%				pages={295--308},
%				issn={0894-0347},
%				review={\MR{4322394}},
%				doi={10.1090/jams/984},
%			}

			\bib{Po1}{article}{
				title={Polymath16, fifth thread: Human-verifiable proofs},
				eprint={https://dustingmixon.wordpress.com/2018/05/10/polymath16-fifth-thread-human-verifiable-proofs/}
			}

			\bib{Polymath-wiki}{article}{
				title={Polymath wiki: Hadwiger-Nelson problem},
				eprint={https://asone.ai/polymath/index.php?title=Hadwiger-Nelson_problem}
			}

%			\bib{Prosanov}{article}{
%				author={Prosanov, Roman},
%				title={A new proof of the Larman-Rogers upper bound for the chromatic
%					number of the Euclidean space},
%				journal={Discrete Appl. Math.},
%				volume={276},
%				date={2020},
%				pages={115--120},
%				issn={0166-218X},
%				%review={\MR{4075526}},
%				%doi={10.1016/j.dam.2019.05.020},
%			}

			\bib{RadToth}{article}{
				author={Radoi\v ci\'c, R.},
				author={T\'oth, G.},
				title={\emph{Note on the chromatic number of the space}},
				conference={
					title={\emph{Discrete and computational geometry}},
				},
				book={
					series={Algorithms Combin.},
					volume={25},
					publisher={Springer, Berlin},
				},
				date={2003},
				pages={695--698},
				%   review={\MR{2038498}},
				%   doi={10.1007/978-3-642-55566-4_32},
			}

			\bib{raigorod}{article}{
				author={Raigorodskii, A. M.},
				title={\emph{On the chromatic number of a space}},
				journal={\emph{Russ. Math. Surv.}},
				volume={55(N2)},
				date={2000},
				pages={351--352},
				%   issn={0012-365X},
				%   review={\MR{1927575}},
				%   doi={10.1016/S0012-365X(00)00406-4},
			}

\bib{Rogers50}{article}{
	author={Rogers, C. A.},
	title={A note on coverings and packings},
	journal={J. London Math. Soc.},
	volume={25},
	date={1950},
	pages={327--331},
%	issn={0024-6107},
%	review={\MR{43819}},
%	doi={10.1112/jlms/s1-25.4.327},
}

%			\bib{rogers}{article}{
%				author={Rogers, C. A.},
%				title={A note on coverings},
%				journal={Mathematika},
%				volume={4},
%				date={1957},
%				pages={1--6},
%%				issn={0025-5793},
%%				review={\MR{90824}},
%%				doi={10.1112/S0025579300001030},
%			}

%			\bib{rogers:57}{article}{
%				author={Rogers, C. A.},
%				title={Lattice covering of space: The Minkowski-Hlawka theorem},
%				journal={Proc. London Math. Soc. (3)},
%				volume={8},
%				date={1958},
%				pages={447--465},
%				issn={0024-6115},
%				review={\MR{96639}},
%				doi={10.1112/plms/s3-8.3.447},
%			}
%			
%			\bib{Ryskov}{article}{
%				author={Ry\v{s}kov, S. S.},
%				title={Density of an $(r R)$-system},
%				language={Russian},
%				journal={Mat. Zametki},
%				volume={16},
%				date={1974},
%				pages={447--454},
%				issn={0025-567X},
%				%	review={\MR{396415}},
%			}
%			
%			

\bib{Sch-Va}{article}{
	author={Sch\"{u}rmann, Achill},
	author={Vallentin, Frank},
	title={Computational approaches to lattice packing and covering problems},
	journal={Discrete Comput. Geom.},
	volume={35},
	date={2006},
	number={1},
	pages={73--116},
%	issn={0179-5376},
%	review={\MR{2183491}},
%	doi={10.1007/s00454-005-1202-2},
}

			\bib{SikSchVall}{article}{
	author={Dutour Sikiri\'{c}, Mathieu},
	author={Sch\"{u}rmann, Achill},
	author={Vallentin, Frank},
	title={Complexity and algorithms for computing Voronoi cells of lattices},
	journal={Math. Comp.},
	volume={78},
	date={2009},
	number={267},
	pages={1713--1731},
	%	issn={0025-5718},
	%	review={\MR{2501071}},
	%	doi={10.1090/S0025-5718-09-02224-8},
}

\bib{Soifer}{book}{
	author={Soifer, Alexander},
	title={The mathematical coloring book},
	note={Mathematics of coloring and the colorful life of its creators;
		With forewords by Branko Gr\"{u}nbaum, Peter D. Johnson, Jr. and Cecil
		Rousseau},
	publisher={Springer, New York},
	date={2009},
	pages={xxx+607},
%	isbn={978-0-387-74640-1},
%	review={\MR{2458293}},
}

	\bib{HermiteNF}{article}{
	author={Wikipedia contributors},
	title={Hermite normal form}, eprint={https://en.wikipedia.org/wiki/Hermite\_normal\_form}, note={Accessed on November 25, 2021}
}

\bib{pseudoinverse}{article}{
	author={Wikipedia contributors$ $},
	title={Moore-Penrose pseudoinverse}, eprint={https://en.wikipedia.org/wiki/Moore-Penrose\_inverse}, note={Accessed on November 29, 2021}
}

\bib{Zong}{article}{
	author={Zong, Chuanming},
	title={From deep holes to free planes},
	journal={Bull. Amer. Math. Soc. (N.S.)},
	volume={39},
	date={2002},
	number={4},
	pages={533--555},
	issn={0273-0979},
%	review={\MR{1920280}},
%	doi={10.1090/S0273-0979-02-00950-3},
}

%			\bib{Siegel}{article}{
%				author={Siegel, Carl Ludwig},
%				title={A mean value theorem in geometry of numbers},
%				journal={Ann. of Math. (2)},
%				volume={46},
%				date={1945},
%				pages={340--347},
%				issn={0003-486X},
%				review={\MR{12093}},
%				doi={10.2307/1969027},
%			}
%			
%			\bib{soifer}{book}{
%				author={Soifer, A.},
%				title={The mathematical coloring book: Mathematics of coloring and the colorful life of its creators},
%				publisher={Springer, New York},
%				date={2009},
%			}
%			
%			\bib{Szlam}{article}{
%				author={Szlam, Arthur D.},
%				title={Monochromatic translates of configurations in the plane},
%				journal={J. Combin. Theory Ser. A},
%				volume={93},
%				date={2001},
%				number={1},
%				pages={173--176},
%				issn={0097-3165},
%				review={\MR{1807118}},
%				doi={10.1006/jcta.2000.3065},
%			}

		\end{biblist}
	\end{bibdiv}

\end{document}